\newcommand{\R}{{\mathbb{R}}}%Real number field
\newcommand{\Z}{{\mathbb{Z}}}
\newcommand{\ole}{\overline{e}}
\newcommand{\olf}{\overline{f}}
\newcommand{\ab}{\mathrm{ab}}
\DeclareMathOperator{\coker}{coker}
\DeclareMathOperator{\Crystal}{Crystal}
\DeclareMathOperator{\Div}{Div}
\DeclareMathOperator{\Image}{Image} 
\DeclareMathOperator{\Jac}{Jac}
\DeclareMathOperator{\pc}{pc}
\DeclareMathOperator{\Pic}{Pic}
\DeclareMathOperator{\Sk}{Sk}
\DeclareMathOperator{\source}{source}
\DeclareMathOperator{\sr}{sr}
\DeclareMathOperator{\target}{target}
\DeclareMathOperator{\valency}{valency}
\DeclareMathOperator{\Vor}{Vor}
\providecommand{\norm}[1]{\lVert#1\rVert} %norm
\providecommand{\abs}[1]{\lvert#1\rvert} %absolute value
\newtheorem{theorem}{Theorem}
\newtheorem{proposition}[theorem]{Proposition}
\newtheorem{lemma}[theorem]{Lemma}
\newtheorem{conjecture}[theorem]{Conjecture}
\newtheorem{lemma-definition}[theorem]{Lemma-Definition}
\theoremstyle{definition}
\newtheorem{definition}[theorem]{Definition}
\newtheorem{remark}[theorem]{Remark}
\newtheorem{example}[theorem]{Example}
\begin{document}

%\maketitle

\begin{flushleft}
Tohoku Math.\ J., to appear
\end{flushleft}

\begin{center}
{\large\bfseries
VORONOI TILINGS HIDDEN IN CRYSTALS \\ ---THE CASE OF MAXIMAL ABELIAN COVERINGS---
}
\end{center}

\begin{center}
\textsc{Tadao ODA}\footnote{
Partly supported by JSPS Grant-in-Aid for Scientific Research (S-19104002)

\textit{$2010$ Mathematics Subject Classification.} Primary 52C22; Secondary 05C40, 14T05, 74E15, 
82B20, 82D25, 14M25.

\textit{Key words and phrases.} graph, strongly connected, bridgeless, crystal, 
discrete geometric analysis, geometric invariant theory, standard realization, Voronoi cell, 
Wigner-Seitz cell, Voronoi tiling, tropical Abel-Jacobi map.
}
\end{center}

\begin{abstract}
Consider a finite connected graph possibly with multiple edges and loops.
In discrete geometric analysis, Kotani and Sunada constructed the crystal associated to
the graph as a standard realization of the maximal abelian covering of the graph.
As an application of what the author showed in an earlier paper with Seshadri
as a by-product of Geometric Invariant Theory, he shows that 
the Voronoi tiling (also known as the Wigner-Seitz tiling) is hidden in the crystal, that is, 
the crystal does not intrude the interiors of the top-dimensional Voronoi cells.
The result turns out to be closely related to the tropical Abel-Jacobi map of 
the associated compact tropical curve.
\end{abstract}

%%%%%%%%%%%%%%%%
\section*{Introduction}

Let $\Gamma$ be a finite connected graph possibly with multiple edges and loops.
With an orientation for each edge of $\Gamma$,
we can define 
\begin{align*}
C&:=C_1(\Gamma,\R)\supset C_1(\Gamma,\Z)=:\Lambda\quad\text{with the standard Euclidean metric}\\
H&:=H_1(\Gamma,\R)\supset H_1(\Gamma,\Z)=:H_{\Z}\\
 &\text{with the orthogonal projection }\pi\colon C\longrightarrow H.
\end{align*}

In discrete geometric analysis, Kotani-Sunada \cite{KotaniSunada1} constructed, 
for the maximal abelian covering $\Gamma^{\ab}\rightarrow\Gamma$ and a choice of 
a base vertex of $\Gamma$, a \emph{standard realization}
\[
\sr\colon\Gamma^{\ab}\longrightarrow\Crystal(\Gamma)\subset H,
\]
where $\Crystal(\Gamma)$ is a one-dimensional complex of line segments connecting
lattice points in $\pi(\Lambda)$. Moreover, $\sr$ is equivariant with respect to
the action of $H_{\Z}$ on $\Gamma^{\ab}$ and its translation action on $H$.

The standard realization of Kotani-Sunada collapses the bridges in the
graph $\Gamma$ so that
\[
(\Crystal(\Gamma))/H_1(\Gamma,\Z)\cong\overline{\Gamma},
\]
where $\overline{\Gamma}$ is the graph obtained by the collapse of the bridges.

In this paper (under the obvious assumption $\dim H\geq 2$) we show 
that $\Crystal(\Gamma)$ does not intrude the interiors of the
top-dimensional Voronoi cells (also known as Wigner-Seitz cells) in the Voronoi tiling
\[
\Vor(H,\xi_0+H_{\Z})
\]
for a suitable $\xi_0\in H$, that is, the Voronoi tiling $\Vor(H,\xi_0+H_{\Z})$ is ``hidden'' 
in the crystal $\Crystal(\Gamma)$.  

For example, a rhombic dodecahedral tiling is hidden in the diamond crystal
(cf.\ Example \ref{ex_diamond}), while
a truncated octahedral tiling is hidden in the $K_4$ crystal (cf.\ Example \ref{ex_K4}).

For the proof, we may assume without loss of generality that
the graph $\Gamma$ is bridgeless.
In fact, we can specify $\xi_0$ (hence the centers $\xi_0+H_{\Z}$ of the top-dimensional 
Voronoi cells) when $\Gamma$ is endowed with
a ``strongly connected'' orientation that is possible under the bridgelessness assumption
thanks to a result due to Robbins \cite{Robbins}.

A crucial role is played by Proposition \ref{prop_associatedVoronoi},
which describes the Voronoi cells in $\Vor(H,\xi_0+H_{\Z})$ arising out of graphs.
The proposition not only describes the facets of the Voronoi cells but gives a recipe 
for computing the vertices of the Voronoi cells. Hence it is an improvement of 
Oda-Seshadri \cite[Prop.\ 5.2, (1)]{OdaSeshadri}, which was a by-product of an attempt to apply 
the notion of stability in Geometric Invariant Theory to the compactifications of 
the generalized Jacobian varieties of algebraic curves with nodes.
(See Alexeev \cite{Alexeev} for later developments.)

More generally, Kotani-Sunada \cite{KotaniSunada1} considered the standard realization
of free abelian coverings $\widetilde{\Gamma}\rightarrow\Gamma$ that are not necessarily 
maximal abelian.
Denote
\[
L:=\Image\left(H_1(\widetilde{\Gamma},\Z)\rightarrow H_1(\Gamma,\Z)\right)\subset H_{\Z},
\]
which Sunada \cite{Sunada-book} calls the vanishing subgroup, 
and let $E'\subset H$ be the orthogonal complement in $H$ of the subspace spanned by $L$
\begin{align*}
E'&:=\{x\in H;\;(x,L)=0\}\subset H\subset C\\
  &\text{with the orthogonal projection }\pi'\colon C\longrightarrow E'.
\end{align*}
$H_{\Z}/L$ is the free abelian covering group of $\widetilde{\Gamma}\rightarrow\Gamma$.
Then the standard realization in this case is
\[
\widetilde{\sr}\colon\widetilde{\Gamma}\longrightarrow\pi'(\Crystal(\Gamma))\subset E'
\cong (H_{\Z}/L)\otimes_{\Z}\R.
\]
Even in this case, we expect that $\pi'(\Crystal(\Gamma))$ does not intrude the interiors
of the top-dimensional cells in a $\pi'(H_{\Z})$-periodic convex polyhedral tiling
of $E'$ (such as the Voronoi tiling
\[
\Vor(E',\xi_0'+\pi'(H_{\Z}))
\]
for a suitable choice of $\xi_0'\in E'$).

For instance, the Lonsdaleite crystal is the orthogonal projection onto the $3$-space of the
standard realization in the $5$-space of the maximal abelian covering.
A tiling by regular hexagonal cylinders, which is a Voronoi tiling
$\Vor(E',\xi'_0+\pi'(H_{\Z}))$ for a suitable $\xi_0\in E'$, turns out to be hidden in the 
Lonsdaleite crystal (cf.\ Example \ref{ex_lonsdaleite}). 

%%%%%%%%%%%%
If $\Gamma$ is endowed with a strongly connected orientation and $\dim H\geq 2$, then
we can show the existence of a \emph{nondegenerate} $H_{\Z}$-periodic subdivision $\diamondsuit$ of 
$\Vor(H,\xi_0+H_{\Z})$ in the sense of Oda-Seshadri \cite[Prop.\ 7.6 and Thm.\ 7.7]{OdaSeshadri}
so that
\[
\Crystal(\Gamma)\subset\Sk^1(\diamondsuit),\quad\text{and}\quad
\pi(\Lambda)=\Sk^0(\diamondsuit).
\]
$\diamondsuit$ is obtained as one of the \emph{Namikawa tilings} 
(which we called Namikawa decompositions in \cite{OdaSeshadri}). The details will be
explained elsewhere (cf.\ \cite{OdaCutAndProject}).
%%%%%%%%%%%%

%%%%%%%%%%%%%
The standard realization $\sr$ modulo $H_{\Z}$ turns out to be closely related to
the tropical Abel-Jacobi map
\[
\mu\colon\Gamma\longrightarrow\Jac(\Gamma):=H/H_{\Z}
\]
in the sense of Mikhalkin-Zharkov \cite{MikhalkinZharkov}, where
$\Gamma$ with weight $1$ for each edge is regarded as a compact tropical curve.
Our result implies that when the genus is greater than or equal to $2$, the image of 
the tropical Abel-Jacobi map $\mu$ lies in a translate of the tropical theta divisor in 
the tropical Jacobian variety $\Jac(\Gamma)$.
%%%%%%%%%%%%%

Thanks are due to Mathieu Dutour-Sikiri\'c for valuable information on Voronoi tilings.
Thanks are also due to Toshikazu Sunada who made available to the author the manuscript
for his forthcoming monograph \cite{Sunada-book}.
The author is grateful to Valery Alexeev for pointing out the relevance of tropical geometry.
Thanks are due to the referee for careful reading and valuable suggestions.

%%%%%%%%%%%%%%%%
\section{Graphs and their associated Voronoi tilings}

A \emph{graph} $\Gamma$ in this paper is a \emph{finite connected} graph possibly with
multiple edges and loops (sometimes called a finite connected multigraph possibly with loops), 
that is, $\Gamma=\left(\{v_i\}_{i\in I}, \{e_j\}_{j\in J}\right)$ is a pair consisting of
a finite set $\{v_i\}_{i\in I}$ of vertices and a finite set $\{e_j\}_{j\in J}$ of edges.
Each edge $e_j$ joins two vertices $v_i$ and $v_{i'}$ for some $i,i'\in I$.
We call $e_j$ a loop when $i=i'$.

We assign and fix an \emph{orientation} of each edge, so that $e_j$
is regarded as an arrow from $v_i$ to $v_{i'}$, and we denote
\[
\source(e_j):=v_i,\quad\target(e_j):=v_{i'}.
\]
The orientation enables us to define the homology and cohomology groups of $\Gamma$:
\[
C_0(\Gamma,\Z):=\bigoplus_{i\in I}\Z v_i,\quad C_1(\Gamma,\Z):=\bigoplus_{j\in J}\Z e_j
\]
with the boundary and coboundary maps 
\[
\partial\colon C_1(\Gamma,\Z)\longrightarrow C_0(\Gamma,\Z),\qquad
\delta\colon C_0(\Gamma,\Z)\longrightarrow C_1(\Gamma,\Z)
\]
defined by
\begin{eqnarray*}
\partial(e_j)&=&\source(e_j)-\target(e_j),\\[1ex]
\delta(v_i)&=&\sum_{\substack{j\\ \source(e_j)=v_i}}e_j-\sum_{\substack{j\\ \target(e_j)=v_i}}e_j.
\end{eqnarray*}
We also consider their base extensions (e.g., $\partial_{\R}$ and $\delta_{\R}$) 
to the real numbers $\R$. We thus get
the first homology space $H_1(\Gamma,\R):=\ker(\partial_{\R})$ and its lattice 
$H_1(\Gamma,\Z):=\ker(\partial)$, 
while $H^1(\Gamma,\R):=\coker(\delta_{\R})$ and $H^1(\Gamma,\Z):=\coker(\delta)$,
although we do not use the the cohomology space and group below.
From now on, we denote $\partial_{\R}$ and $\delta_{\R}$ without the subscript $\R$.

We introduce the inner products on $C_0(\Gamma,\R)$ and $C_1(\Gamma,\R)$ by
\begin{eqnarray*}
[v_i,v_{i'}]&:=&\left\{\begin{array}{lcl}1&&i=i'\\ 0&&i\neq i' \end{array}\right.
\qquad\text{ for }v_i,v_{i'}\in C_0(\Gamma,\R),\\[2ex]
(e_j,e_{j'})&:=&\left\{\begin{array}{lcl}1&&j=j'\\ 0&&j\neq j'\end{array}\right.
\qquad\text{ for }e_j,e_{j'}\in C_1(\Gamma,\R).
\end{eqnarray*}

The inner product $(\;,\;)$ induces on $H_1(\Gamma,\R)$ a Euclidean metric.
Moreover, it is easy to see that $\partial$ and $\delta$ are adjoint to each other, i.e.,
\[
(\delta y,x)=[y,\partial x],\qquad\text{for all }y\in C_0(\Gamma,\R)\text{ and }x\in C_1(\Gamma,\R),
\]
hence we get an orthogonal decomposition
\[
C_1(\Gamma,\R)=H_1(\Gamma,\R)\oplus\delta C_0(\Gamma,\R)\qquad(\text{orthogonal}).
\]

For simplicity, we introduce the following notation:
\begin{itemize}
\item $C:=C_1(\Gamma,\R)\supset C_1(\Gamma,\Z)=:\Lambda$
\item $H:=H_1(\Gamma,\R)\supset H_1(\Gamma,\Z)=:H_{\Z}=\Lambda\cap H$
\item $\pi\colon C\rightarrow H$ the orthogonal projection
\end{itemize}

In this paper, we adopt the following convention of\ Bondy \cite{Bondy}:

\begin{definition} \label{def_walkpath}
Let $\Gamma$ be an oriented graph. % with a fixed base vertex $v_0$.
 \begin{itemize}
 \item An edge $e$ is itself with the given orientation, while $-e$ is the edge with the orientation
 opposite to the given one. Thus
 \[
 \source(-e)=\target(e),\quad\target(-e)=\source(e).
 \]
 \item A \emph{walk} $w$ on $\Gamma$ is a finite sequence
 \[w\colon
 \xymatrix@C=4em{
 v_{i(0)}\ar[r]^{\varepsilon_{j(1)}e_{j(1)}}
  &v_{i(1)}\ar[r]^{\varepsilon_{j(2)}e_{j(2)}}
   &v_{i(2)}\ar[r]
    &
     & 
      & \\
  &
   &\cdots\ar[r]^{\varepsilon_{j(l)}e_{j(l)}}
    &v_{i(l)}\ar[r]
     &\cdots\ar[r]^(.45){\varepsilon_{j(m)}e_{j(m)}}
      &v_{i(m)}
 }
 \]
 with $\varepsilon_{j(l)}=\pm 1$ for all $1\leq l\leq m$ and
 \[
 \target(\varepsilon_{j(l)}e_{j(l)})=v_{i(l)}=\source(\varepsilon_{j(l+1)}e_{j(l+1)}),
 \quad\text{for all $l$ with }1\leq l\leq m-1.
 \]
 \item A \emph{path} is a walk with all the vertices $v_{i(l)}$'s distinct
 (hence the edges $e_{j(l)}$'s are distinct as well). 
 A \emph{directed path} is a path with $\varepsilon_{j(l)}=1$ for all $l$.
 \item A \emph{trail} is a walk with all the edges $e_{j(l)}$'s distinct.
 A \emph{directed trail} is a trail with $\varepsilon_{j(l)}=1$ for all $l$.
 \item A \emph{circuit} is a path coming back to the starting vertex, i.e., with $v_{i(0)}=v_{i(m)}$. 
 A \emph{directed circuit} is a directed path with $v_{i(0)}=v_{i(m)}$.
 \end{itemize}
\end{definition}

\begin{lemma} \label{lem_bridge}
For an edge $e$ of a \textup{(}connected\textup{)} graph $\Gamma$, the following are 
equivalent\textup{:}
 \begin{enumerate}
 \item[\textup{(i)}] $e$ is a bridge, i.e., its removal disconnects the graph $\Gamma$.
 
 \item[\textup{(ii)}] $e\in\delta C_0(\Gamma,\R)$.
 
 \item[\textup{(iii)}] $\pi(e)=0$.
 \end{enumerate}
\end{lemma}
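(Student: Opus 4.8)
The plan is to dispose first of the purely formal equivalence (ii) $\Leftrightarrow$ (iii) and then to close the cycle (i) $\Rightarrow$ (ii) $\Leftrightarrow$ (iii) $\Rightarrow$ (i). For (ii) $\Leftrightarrow$ (iii), recall the orthogonal decomposition $C = H \oplus \delta C_0(\Gamma,\R)$ already established above. Since $\pi$ is the orthogonal projection onto $H$, its kernel is exactly the orthogonal complement of $H$ in $C$, namely $\delta C_0(\Gamma,\R)$. Hence $\pi(e) = 0$ if and only if $e \in \delta C_0(\Gamma,\R)$, which is precisely (iii) $\Leftrightarrow$ (ii). This step needs nothing beyond the decomposition and the defining property of an orthogonal projection.

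For (i) $\Rightarrow$ (ii) I would argue via a ``cut'' vector in $C_0$. Suppose $e$ is a bridge; since a loop is never a bridge we have $\source(e) \neq \target(e)$, and removing $e$ splits the connected graph $\Gamma$ into exactly two connected components. Let $S \subseteq I$ index the vertices of the component containing $\source(e)$, and set $y := \sum_{i \in S} v_i \in C_0(\Gamma,\Z)$. A direct computation of $\delta y$ from the definition of $\delta$ shows that the contributions of every edge with both endpoints in $S$ (or both outside $S$) cancel, so that $\delta y$ is the signed sum of the edges crossing the cut determined by $S$. Because $e$ is a bridge, the only crossing edge is $e$ itself, oriented from $S$ to its complement, whence $\delta y = e$ and therefore $e \in \delta C_0(\Gamma,\Z) \subset \delta C_0(\Gamma,\R)$, giving (ii).

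For the remaining implication (iii) $\Rightarrow$ (i) I would use the contrapositive, exploiting the fact that the coefficients of a $1$-chain are recovered by the inner product: for $z = \sum_j c_j e_j$ the orthonormality of $\{e_j\}$ gives $(e_k, z) = c_k$. If $e$ is not a bridge, then either $e$ is a loop---in which case $\partial e = \source(e) - \target(e) = 0$, so $e \in \ker\partial = H$ and $\pi(e) = e \neq 0$---or $\source(e) \neq \target(e)$ and $\Gamma$ with $e$ removed is still connected, so there is a path joining $\target(e)$ to $\source(e)$ avoiding $e$; adjoining $e$ yields a circuit whose associated cycle $z \in H_{\Z}$ has coefficient $\pm 1$ on $e$. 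In either case $(e, z) \neq 0$ for some $z \in H$, so $e$ is not orthogonal to $H$ and $\pi(e) \neq 0$, contradicting (iii). Combining the three steps closes the cycle of implications.

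The routine parts are the bookkeeping in computing $\delta y$ and the coefficient-extraction formula $(e_k, z) = c_k$, both immediate from the orthonormality of $\{v_i\}$ and $\{e_j\}$. The only point demanding genuine care is the graph-theoretic input in (i) $\Rightarrow$ (ii) and (iii) $\Rightarrow$ (i): one must handle loops and multiple edges correctly (a loop is never a bridge and always represents a nonzero cycle) and confirm that deleting a bridge yields exactly two components with $e$ as the unique crossing edge. I expect this case analysis---rather than any linear algebra---to be the main obstacle, though it is standard once the multigraph conventions of Definition \ref{def_walkpath} are kept in mind.
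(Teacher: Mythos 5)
Your proposal is correct and follows essentially the same route as the paper: the orthogonal decomposition $C=H\oplus\delta C_0(\Gamma,\R)$ for (ii)$\Leftrightarrow$(iii), the cut vector $\delta\bigl(\sum_{i\in S}v_i\bigr)=e$ for (i)$\Rightarrow$(ii), and a circuit through a non-bridge $e$ pairing nontrivially with $e$ for (iii)$\Rightarrow$(i). Your explicit treatment of the loop case is a minor elaboration the paper leaves implicit (a loop forms a one-edge circuit under Definition \ref{def_walkpath}), but the argument is the same.
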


\begin{remark}
A bridge is also called an isthmus, a cut edge, and a separating edge. 
\end{remark}

\begin{proof}
(ii) and (iii) are obviously equivalent.

(i) implies (ii). Indeed, suppose $e$ is a bridge. After its removal, denote by $I'\subset I$ 
the set of vertices of the connected component contining $\source(e)$. 
Then obviously $e=\delta(\sum_{i\in I'}v_i)$.

(iii) implies (i). Indeed, if $e$ is not a bridge, then there exists a circuit $\gamma$ containing
$e$. Then $(\gamma,\pi(e))=(\gamma,e)\neq 0$ with $\gamma$ regarded as an element
of $H_{\Z}=H_1(\Gamma,\Z)$ in an obvious way (cf.\ Definition \ref{def_lambda} below), 
hence $\pi(e)\neq 0$.
\end{proof}

\begin{proposition} \label{prop_bridgeless}
For a graph $\Gamma$, the following are equivalent\textup{:}
 \begin{enumerate}
 \item[$(1)$] $\Gamma$ is \emph{bridgeless}, i.e., $\Gamma$ remains connected after the removal
 of any edge. 
 
 \item[$(2)$] $\Gamma$ is \emph{$2$-edge connected}, i.e., any pair of vertices can be connected by
 two paths without common edges.
 
 \item[$(3)$] Any edge is contained in a circuit.
 
 \item[$(4)$] $\Gamma$ has a \emph{strongly connected orientation}, i.e., it can be so oriented that
 there exists a directed path from any vertex to another.
 \end{enumerate}
\end{proposition}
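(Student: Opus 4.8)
The plan is to establish all four equivalences through a short cycle of implications, concentrating the genuinely combinatorial work into as few steps as possible. First I would dispose of $(1)\Leftrightarrow(3)$ directly from Lemma~\ref{lem_bridge}: an edge $e$ fails to lie in any circuit precisely when it is a bridge. Indeed, if $e$ lies in a circuit, then after deleting $e$ the remaining edges of that circuit still join $\source(e)$ to $\target(e)$, so no pair of vertices is separated and $e$ is not a bridge; conversely, if $e$ is not a bridge, deleting it leaves $\Gamma$ connected, so there is a path from $\source(e)$ to $\target(e)$ avoiding $e$, and closing it up with $e$ produces a circuit through $e$ (this is exactly the construction used in the proof of Lemma~\ref{lem_bridge}, (iii)$\Rightarrow$(i)). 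Hence $(1)$, ``$\Gamma$ is bridgeless,'' says precisely that every edge lies in a circuit, which is $(3)$.

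Two of the remaining implications are immediate. For $(2)\Rightarrow(1)$: given any edge $e$ and any two vertices $u,v$, there are two edge-disjoint paths joining them, at most one of which uses $e$, so the other survives the deletion of $e$; thus $\Gamma$ stays connected and $e$ is not a bridge. For $(4)\Rightarrow(3)$: fix a strongly connected orientation and let $e$ run, say, from $u$ to $v$ in it; by strong connectivity there is a directed path from $v$ back to $u$, which together with $e$ forms a (directed) circuit through $e$, and this is a circuit in the sense of Definition~\ref{def_walkpath} for the underlying graph. So it remains only to prove $(1)\Rightarrow(2)$ and $(1)\Rightarrow(4)$: from $(1)$ we then reach $(2),(3),(4)$, while $(2)\Rightarrow(1)$, $(3)\Rightarrow(1)$, and $(4)\Rightarrow(3)\Rightarrow(1)$ close the loop.

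For $(1)\Rightarrow(2)$ I would argue by edge connectivity. Fixing two vertices $u,v$, the maximum number of pairwise edge-disjoint paths from $u$ to $v$ equals, by the edge form of Menger's theorem, the minimum number of edges whose deletion separates $u$ from $v$. If this minimum were at most $1$, then either $u$ and $v$ already lie in different components, contradicting connectedness, or a single edge separates them, and such an edge would be a bridge, contradicting $(1)$. Hence the minimum cut is at least $2$, and there exist two edge-disjoint paths from $u$ to $v$. Alternatively one can avoid Menger and build the two paths by hand from the ear decomposition sketched next.

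The step I expect to be the main obstacle is $(1)\Rightarrow(4)$, the construction of a strongly connected orientation; this is Robbins' theorem \cite{Robbins}. The cleanest self-contained route is an \emph{ear decomposition}: starting from $(3)$, one writes a bridgeless $\Gamma$ as a circuit $C_0$ together with successively attached ears $P_1,\dots,P_k$, each a path (or circuit) meeting the previously built subgraph exactly in its endpoints. Orienting $C_0$ cyclically and then orienting each ear as a directed path makes every newly added vertex and edge both reachable from, and able to reach, the already-oriented part, so the whole orientation is strongly connected. The real content is producing the ear decomposition from bridgelessness, i.e.\ showing every edge can be absorbed into such a chain of circuits; this is precisely the combinatorial heart of Robbins' theorem, and in the write-up I would either carry out this induction or simply cite \cite{Robbins}. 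The same ear decomposition yields $(2)$ directly as well, giving the Menger-free alternative mentioned above.
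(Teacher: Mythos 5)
Your proposal is correct, but it is worth noting that the paper itself does not prove this proposition at all: it simply records that the equivalence of (1), (2), (3) is well known (citing Frank) and that $(1)\Leftrightarrow(4)$ is Robbins' theorem (citing \cite{Robbins}), adding only Robbins' informal ``traffic'' description. So any honest argument you give is more than the paper supplies. Your route is the standard one and is sound: $(1)\Leftrightarrow(3)$ is exactly the content already used inside the proof of Lemma \ref{lem_bridge} (a non-bridge $e$ closes up with a simple path from $\target(e)$ to $\source(e)$ in $\Gamma\setminus e$ to give a circuit through $e$); $(2)\Rightarrow(1)$ and $(4)\Rightarrow(3)$ are immediate as you say; $(1)\Rightarrow(2)$ via the edge version of Menger's theorem is fine, though it imports a theorem heavier than the statement being proved; and $(1)\Rightarrow(4)$ via an ear decomposition is the classical proof of Robbins' theorem. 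The one place your write-up is not yet self-contained is the existence of the ear decomposition for a bridgeless connected graph, which you correctly flag as the combinatorial heart and propose either to prove by induction or to cite --- given that the paper itself only cites \cite{Robbins} for this step, that is an acceptable resolution, but if you want a genuinely self-contained proof you must carry out that induction (grow a subgraph $\Gamma_0$ closed under ears; if $\Gamma_0\neq\Gamma$, connectedness gives an edge leaving $\Gamma_0$, and bridgelessness puts it in a circuit whose excursion outside $\Gamma_0$ furnishes the next ear). Minor care is also needed with the paper's conventions: ``circuit'' here means a closed \emph{path} with distinct intermediate vertices, so when you close up a walk you should first extract a simple path, and the loop/multi-edge cases should be checked separately (they are all harmless).
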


The equivalence of (1), (2) and (3) is well known and easy to prove.
(cf., e.g., Frank \cite[Prop.\ 2.10]{Frank}). The equivalence of (1) and (4) is due to
Robbins \cite{Robbins}. (See also Frank \cite[Cor.\ 2.13]{Frank}.)
Here is an apt description by Robbins \cite{Robbins} of the equivalence:
\begin{quote}
Let us suppose that week-day traffic in our city is not particularly heavy, so that
all streets are two-way, but that we wish to be able to repair any one street at a time and
still detour traffic around it so that any point in the city may be reached from any other point.
On week-ends no repairing is done, so that all streets are available, but due to heavy traffic
(perhaps it is a college town with a noted football team) we wish to make all streets one-way
and still be able to get from any point to any other without violating the law.
Then the theorem states that if our street-system is suitable for week-day traffic it is also
suitable for week-end traffic and conversely.
\end{quote}
As we see in Remark \ref{rem_bridgeless}, (3) below, we can describe all the possible strongly
connected orientations on a bridgeless graph by Kotani-Sunada \cite{KotaniSunada2}.

\begin{definition} \label{def_voronoi}
In a Euclidan space $E\cong\R^n$, consider a lattice $M\subset E$ and a coset $\Xi:=\xi_0+M\subset E$.
For each $\xi\in\Xi$, define its Voronoi cell (also known as Wigner-Seitz cell) centered at $\xi$ to be
\[
V(\xi):=\left\{x\in E;\;\; \norm{x-\xi}\leq\norm{x-\eta},\text{ for all }\eta\in\Xi\right\},
\]
which is a convex polytope. 
Different Voronoi cells do not intersect one another in their relative
interiors, and the union of the relative interiors of the members of
\[
\Vor(E,\xi_0+M):=\left\{V(\xi)\;(\xi\in\xi_0+M)\text{ and their faces}\right\}
\]
fills up the entire space $E$, giving rise to an $M$-periodic facet-to-facet convex polyhedral tiling 
called the \emph{Voronoi tiling} of $E$. More intuitively, the $M$-translates of the Voronoi cell
$V(\xi_0)$ centered at $\xi_0$ form a facet-to-facet tiling of $E$.
In particular, 
\[
E=V(\xi_0)+M
\]
so that $V(\xi_0)$ is a ``fundamental domain'' with respect to the translation action
of $M$ on $E$.
\end{definition}

\begin{definition} \mbox{} \label{def_eJVJDJ}
\begin{itemize}
\item For any subset $J'\subset J$, we denote
\[
e(J'):=\sum_{j\in J'}e_j\in C.
\]

\item In the Voronoi tiling $\Vor(C,\Lambda)$, the Voronoi cell centered at the origin is denoted
\begin{align*}
V_J&:=\sum_{j\in J}\left[-\frac{1}{2},\frac{1}{2}\right]e_j\\
   &=\left\{\sum_{j\in J}s_je_j;\;-\frac{1}{2}\leq s_j\leq\frac{1}{2}, 
   \text{ for all }j\in J\right\}\subset C.
\end{align*}

\item In the Voronoi tiling $\Vor(C,(e(J)/2)+\Lambda)$, the Voronoi cell centered at $e(J)/2$ is
\begin{align*}
D_J&:=\sum_{j\in J}[0,1]e_j\\
   &=\frac{e(J)}{2}+V_J\\
   &=\left\{\sum_{j\in J}s_je_j;\;0\leq s_j\leq 1\right\}\subset C.
\end{align*}
\end{itemize}
\end{definition}

\begin{remark}
For a subset $J_0\subset J$, we have
\[
D_J=e(J_0)+\sum_{j\in J_0}[0,1](-e_j)+\sum_{j\in J\setminus J_0}[0,1]e_j,
\]
so that the re-orientation of the edges $\{e_j;\; j\in J_0\}$ corresponds to 
a translation of the Voronoi tiling.
\end{remark}

\begin{definition} \label{def_graphtheoreticalcycle}
A graph-theoretical cycle (which we simply call a \emph{cycle} from now on) 
is an element of $H_{\Z}:=H_1(\Gamma,\Z)$ such that
$(\gamma,e_j)=0,\pm 1$ for all $j\in J$.
For any cycle $\gamma$, let us denote for simplicity:
\begin{align*}
\gamma^+&:=\{e_j;\; j\in J,(\gamma,e_j)=+1\}\\
\gamma^0&:=\{e_j;\; j\in J,(\gamma,e_j)=0\}\\
\gamma^-&:=\{e_j;\; j\in J,(\gamma,e_j)=-1\}.
\end{align*}
We denote their cardinalities by $\abs{\gamma^+}$, $\abs{\gamma^0}$ and $\abs{\gamma^-}$.
\end{definition}

For a subset $J'\subset J$, let us denote by $\{I,J'\}$ or $\{I,\{e_{j'}\}_{j'\in J'}\}$,
for simplicity, the subgraph 
of $\Gamma$ with the vertex set $\{v_i\}_{i\in I}$ and the edge set $\{e_{j'}\}_{j'\in J'}$.
(We follow the custom in graph theory so that a \emph{spanning} subgraph
is a subgraph with the same vertex set as that of the whole graph $\Gamma$.)

\begin{lemma-definition}[Elementary cycles, cf.\ {\cite[p.\ 21 and Lem.\ 4.6]{OdaSeshadri}}] 
\label{lem-def_elementarycycle}
A \emph{cycle} $\gamma$ is said to be an \emph{elementary cycle}, 
if the following equivalent conditions are satisfied\textup{:}
\begin{itemize}
\item $\gamma\neq 0$ and it cannot be written in the form $\gamma=\gamma_1+\gamma_2$
with nonzero cycles $\gamma_1,\gamma_2\in H_1(\Gamma,\Z)$ with
no common $e_j$ appearing in both $\gamma_1$ and $\gamma_2$ with nonzero coefficients.

\item There exists a spanning tree $T$ and $j\in J\setminus T$ such that either $\gamma$ or $-\gamma$
is of the form
\[
e_j+\sum_{t\in T}\varepsilon_te_t,\qquad\text{for}\quad\varepsilon_t=0,\pm1.
\]

\item $H_1(\{I,\gamma^+\cup\gamma^-\},\R)$ is one-dimensional $({}=\R\gamma)$.

\item $\{\pi(e_j);j\in\gamma^0\}$ spans a codimension one subspace of $H:=H_1(\Gamma,\R)$,
so that $\gamma^{\perp}=\sum_{j\in\gamma^0}\R\pi(e_j)$.
\end{itemize}
\end{lemma-definition}

The proof is easy.
Note that if $\gamma$ is an elementary cycle, then so is $-\gamma$.
Note also that $\gamma^+\cup\gamma^-$ (if appropriately ordered)
of an elementary cycle $\gamma$ is a \emph{circuit} in the sense of Definition \ref{def_walkpath}.

The following proposition plays a central role in this paper:

\begin{proposition}[Associated Voronoi tiling] %\cite[Prop.\ 5.2, (1)]{OdaSeshadri}
\label{prop_associatedVoronoi}
\begin{enumerate}
\item[$(1)$] Let $H:=H_1(\Gamma,\R)$ with its lattice $H_{\Z}:=H_1(\Gamma,\Z)$ and
the orthogonal projection $\pi\colon C\rightarrow H$. Then $\pi(D_J)$ is
the Voronoi cell centered at $\pi(e(J)/2)$ in $\Vor(H,\pi(e(J)/2)+H_{\Z})$.
Hence the $H_{\Z}$-translates of $\pi(D_J)$ form a facet-to-facet tiling of $H$.

\item[$(2)$] We have
\[
\pi(D_J)=\left\{x\in H;\;(x,\gamma)\leq\abs{\gamma^+},\text{ for all elementary cycles }\gamma\right\},
\]
where the above system of defining inequalities are irredundant, i.e., the facets of $\pi(D_J)$
are exactly
\[
\left\{x\in H;\;(x,\gamma)=\abs{\gamma^+}\right\}\cap\pi(D_J)\quad\text{for elementary cycles $\gamma$}.
\]

\item[$(3)$] For each elementary cycle $\gamma$, the vertices of $\pi(D_J)$ lying on the facet
\[
\left\{x\in H;\;(x,\gamma)=\abs{\gamma^+}\right\}\cap\pi(D_J)
\]
are among
\[
\left\{\pi(e(Q));\;\gamma^+\subset Q\subset\gamma^+\cup\gamma^0\right\}.
\]
\end{enumerate}
\end{proposition}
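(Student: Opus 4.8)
The plan is to treat all three parts through the single observation that $\pi(D_J)$ is the zonotope $\sum_{j\in J}[0,1]\,\pi(e_j)$, i.e.\ a translate by $\pi(e(J)/2)$ of the centrally symmetric $\pi(V_J)=\sum_{j}[-\tfrac12,\tfrac12]\,\pi(e_j)$. For $(1)$, translating by $-\pi(e(J)/2)$ reduces the claim to: $\pi(V_J)$ is the Voronoi cell of $H_{\Z}$ centred at $0$. The inclusion $\pi(V_J)\subseteq V(0)$ is immediate from the orthogonal splitting $C=H\oplus\delta C_0(\Gamma,\R)$: writing $x=\pi(x)+k$ with $k\perp H$, for $\mu\in H_{\Z}\subset H$ one gets $\norm{x-\mu}^2-\norm{x}^2=\norm{\pi(x)-\mu}^2-\norm{\pi(x)}^2$, so $\norm{x}\le\norm{x-\mu}$ forces $\norm{\pi(x)}\le\norm{\pi(x)-\mu}$. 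For equality I would compare volumes. Set $g:=\dim H$. The zonotope volume formula gives $\mathrm{vol}(\pi(V_J))=\sum_{\abs{S}=g}\abs{\det(\pi(e_s):s\in S)}$ (determinants taken for a fixed volume form on $H$), and a term is nonzero exactly when $T:=J\setminus S$ is a spanning tree. For such a $T$, the fundamental cycles $\{\gamma_s\}_{s\in J\setminus T}$ form a $\Z$-basis of $H_{\Z}$ with $(\gamma_s,e_{s'})=\pm\delta_{ss'}$ on chords, so $h\mapsto\bigl((h,e_s)\bigr)_{s\in J\setminus T}$ maps $H_{\Z}$ isomorphically onto $\Z^{J\setminus T}$; comparing covolumes yields $\abs{\det(\pi(e_s):s\in J\setminus T)}=1/\mathrm{covol}(H_{\Z})$ for \emph{every} tree. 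Summing and invoking the matrix--tree identity $\mathrm{covol}(H_{\Z})^2=\#\{\text{spanning trees}\}$ (a Cauchy--Binet computation on the same cycle basis) gives $\mathrm{vol}(\pi(V_J))=\mathrm{covol}(H_{\Z})=\mathrm{vol}(V(0))$. Two nested convex bodies of equal volume coincide, and the facet-to-facet tiling is then the defining property of Voronoi cells.

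For $(2)$ I would characterise membership by feasibility: $x\in\pi(D_J)$ iff the fibre $x+\delta C_0(\Gamma,\R)$ meets the unit box $D_J=\{c:0\le(c,e_j)\le1\ \forall j\}$, i.e.\ iff $H^\perp=\delta C_0(\Gamma,\R)$ meets the box $D_J-x$. A subspace meets a box precisely when no element of its orthogonal complement separates them, so by linear-programming duality (equivalently Hoffman's feasibility criterion for tensions) this holds iff $(x,\gamma)\le f(\gamma)$ for all $\gamma\in H$, where $f(\gamma):=\sum_{j\,:\,(\gamma,e_j)>0}(\gamma,e_j)$; note $f(\gamma)=\abs{\gamma^+}$ when $\gamma$ is a cycle. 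It remains to reduce the family $\{\gamma\in H\}$ to elementary cycles. The key combinatorial input is the conformal decomposition: every $\gamma\in H$ is $\gamma=\sum_k c_k\gamma_k$ with $c_k>0$ and elementary cycles $\gamma_k$ whose signs agree edgewise with those of $\gamma$, i.e.\ $\gamma_k^+\subseteq\{j:(\gamma,e_j)>0\}$ and $\gamma_k^-\subseteq\{j:(\gamma,e_j)<0\}$. Conformality makes $f$ additive, $f(\gamma)=\sum_k c_k\abs{\gamma_k^+}$, so $(x,\gamma_k)\le\abs{\gamma_k^+}$ for all $k$ forces $(x,\gamma)\le f(\gamma)$. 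Hence the elementary-cycle inequalities already cut out $\pi(D_J)$.

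Irredundancy and the vertex count in $(3)$ then follow from one description of the supporting face. Fix an elementary cycle $\gamma$ and suppose $x\in\pi(D_J)$ attains $(x,\gamma)=\abs{\gamma^+}$. For any $c=\sum_j s_je_j\in D_J$ with $\pi(c)=x$ we have $(x,\gamma)=\sum_{j\in\gamma^+}s_j-\sum_{j\in\gamma^-}s_j\le\abs{\gamma^+}$, with equality forcing $s_j=1$ for $j\in\gamma^+$ and $s_j=0$ for $j\in\gamma^-$. Thus the face $\{(x,\gamma)=\abs{\gamma^+}\}\cap\pi(D_J)$ is exactly the projected cube-face
\[
\pi(e(\gamma^+))+\sum_{j\in\gamma^0}[0,1]\,\pi(e_j),
\]
a zonotope generated by $\{\pi(e_j):j\in\gamma^0\}$. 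By the last characterisation in Lemma-Definition \ref{lem-def_elementarycycle} these generators span $\gamma^\perp$, a codimension-one subspace, so the face has dimension $g-1$ and is a genuine facet, proving irredundancy. Finally, every vertex of a zonotope $\sum_{j\in\gamma^0}[0,1]\,\pi(e_j)$ is a partial sum $\sum_{j\in S}\pi(e_j)$ with $S\subseteq\gamma^0$; translating by $\pi(e(\gamma^+))$ shows the vertices of $\pi(D_J)$ on this facet are among $\{\pi(e(Q)):\gamma^+\subseteq Q\subseteq\gamma^+\cup\gamma^0\}$, which is $(3)$.

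The main obstacle is concentrated in $(2)$: establishing the identity $\pi(D_J)=\{x:(x,\gamma)\le f(\gamma)\}$ requires the exact subspace-meets-box duality, and the passage from all of $H$ to elementary cycles requires the conformal decomposition, both deployed with careful bookkeeping of signs, since it is precisely the cancellation-free structure that turns $f(\gamma)$ into $\abs{\gamma^+}$. In $(1)$ the quantitative crux is the volume identity $\mathrm{vol}(\pi(V_J))=\mathrm{covol}(H_{\Z})$, for which the matrix--tree theorem (equivalently the Cauchy--Binet identity on a fundamental-cycle basis) is the essential ingredient; everything else is routine analysis of the zonotope face lattice.
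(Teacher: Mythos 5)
Your proof is correct, but it takes a genuinely different and more self-contained route than the paper's. The paper obtains (1) and (2) by quoting Oda--Seshadri \cite[Prop.\ 5.2, (1)]{OdaSeshadri} for the fact that $\pi(V_J)$ is the Voronoi cell of $H_{\Z}$ centred at $0$ (irredundancy included), and then simply translating by $\pi(e(J)/2)$ using $(\gamma,\gamma)=\abs{\gamma^+}+\abs{\gamma^-}$ and $(e(J),\gamma)=\abs{\gamma^+}-\abs{\gamma^-}$; its supplementary ``direct proof'' of (2) and (3) starts from an arbitrary facet $\{(\alpha,x)=c\}\cap\pi(D_J)$ with its vertices $\pi(e(Q_1)),\dots,\pi(e(Q_r))$ and shows that $\alpha$ must be a positive multiple of an elementary cycle. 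You instead prove the Voronoi property from scratch: the inclusion $\pi(V_J)\subseteq V(0)$ by orthogonality of $C=H\oplus\delta C_0(\Gamma,\R)$, and equality by the volume identity $\mathrm{vol}(\pi(V_J))=\#\{\text{spanning trees}\}/\mathrm{covol}(H_{\Z})=\mathrm{covol}(H_{\Z})$, which combines the zonotope volume formula, the duality of $\{\gamma_s\}$ and $\{\pi(e_s)\}$ (essentially Lemma \ref{lem_duallattice}), and the matrix--tree/Cauchy--Binet identity; for (2) you replace the citation by the support-function/separation description of the projected box plus the conformal (flow) decomposition of an arbitrary real cycle into elementary cycles; and for (3) you identify the face $\{(x,\gamma)=\abs{\gamma^+}\}\cap\pi(D_J)$ explicitly as the translated sub-zonotope $\pi(e(\gamma^+))+\sum_{j\in\gamma^0}[0,1]\pi(e_j)$, whose affine hull is a hyperplane by the last condition of Lemma-Definition \ref{lem-def_elementarycycle}. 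This last step is a real gain: it delivers irredundancy directly (each elementary cycle supports a genuine $(g-1)$-dimensional face), whereas the paper's direct argument only shows every facet \emph{comes from} an elementary cycle and leans on the citation for the converse. What your route buys is independence from \cite{OdaSeshadri}, at the cost of importing three standard external facts (zonotope volume formula, matrix--tree theorem, flow decomposition); what the paper's route buys is brevity. The only points you should make explicit, both routine, are that distinct elementary cycles give distinct supporting half-spaces (so that irredundancy really does follow from each defining a facet) and that every facet of the polytope cut out by your inequalities is one of the sets listed in (2).
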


\begin{remark} \mbox{} \label{rem_bridgeless}
$\{e(J');\;J'\subset J\}$ are the vertices of $D_J$. Hence 
\[
\pi(e(J'))\in\pi(D_J)\qquad\text{for all }J'\subset J.
\]
In particular, $0=\pi(e(\emptyset))\in\pi(D_J)$. 
By $(2)$, we see that $0$ is on the boundary of $\pi(D_J)$ if and only if
$\abs{\gamma^+}=0$ for an elementary cycle $\gamma$.

The above Proposition \ref{prop_associatedVoronoi} is an improvement of 
Oda-Seshadri \cite[Prop.\ 5.2, (1)]{OdaSeshadri}, since it not only describes the Voronoi cells
but (3) also gives a recipe for computing the vertices of the Voronoi cells. In fact,
we have more:
\begin{enumerate}
\item[(1)] We are in a situation of ``space tiling zonotopes'' dealt with by
Venkov \cite{Venkov}, McMullen \cite{McMullen1}, \cite{McMullen2}, \cite{McMullen3}, 
Erdahl-Ryshkov \cite{ErdahlRyshkov}, and Erdahl \cite{Erdahl}. Namely, 
\[
\pi(D_J)=\sum_{j\in J}[0,1]\pi(e_j)
\]
is a \emph{zonotope} whose $H_{\Z}$-translates give rise to a facet-to-facet tiling of $H$.
The family of hyperplanes
\[
\mathcal{H}\colon\qquad\{x\in H;\;(\pi(e_j),x)=m_j\},\qquad j\in J,\quad m_j\in\Z
\]
gives a \emph{dicing} in the sense of Erdahl-Ryshkov \cite{ErdahlRyshkov} and 
Erdahl \cite{Erdahl} with $H_{\Z}$ as the lattice 
and with the elementary cycles as the \emph{edge vectors}.
Moreover, the arrangement of the hyperplanes $\mathcal{H}$ coincides with the Delaunay tiling
of $H$ dual to $\Vor(H,H_{\Z})$.

\item[(2)] Define the \emph{support function} $s_D\colon H\rightarrow\R$ of $\pi(D_J)$ by
sending $u\in H$ to
\begin{align*}
s_D(u)&:=\max_{y\in\pi(D_J)}(u,y)=\max_{x\in D_J}(u,x)\\
      &=\max_{\substack{0\leq s_j\leq 1\\ j\in J}}\left(u,\sum_{j\in J}s_j e_j\right)\\
      &=\max_{\substack{0\leq s_j\leq 1\\ j\in J}}\sum_{j\in J}s_j(u,e_j)=\sum_{j\in J}(u,e_j)_+,
\end{align*}
since $(u,\pi(x))=(u,x)$ for all $u\in H$ and $x\in C$, where
\[
(u,e_j)_+:=
\left\{
\begin{array}{lcl}
(u,e_j)& &\text{if }(u,e_j)\geq 0\\
0           & &\text{otherwise}.
\end{array}
\right.
\]
Thus
\[
\pi(D_J)=\left\{x\in H;\;(u,x)\leq s_D(u),\text{ for all }u\in H\right\}.
\]
$s_D$ is \emph{strictly convex} and piecewise linear with respect to the \emph{normal fan}
$\Sigma$ of $\pi(D_J)$, which obviously is determined by the arrangement of the hyperplanes
\[
\mathcal{H}_0\colon\qquad
(\pi(e_j))^{\perp}=\left\{u\in H;\;(u,\pi(e_j))=0\right\},\quad\text{for $j\in J$}
\]
passing through the origin. Note that $\pi(e_j)\neq 0$ for all $j\in J$ 
if $\Gamma$ is bridgeless by Lemma \ref{lem_bridge}.
 \begin{itemize}
 \item The set $\Sigma(1)$ of one-dimensional cones in $\Sigma$ is in one-to-one correspondence
 with the set of facets of $\pi(D_J)$, hence with the set of elementary cycles so that
 \[
 \Sigma(1)=\left\{\R_{\geq 0}\gamma;\;\gamma\text{ elementary cycles}\right\}.
 \]
 For an elementary cycle $\gamma$, we have
 \[
 H_1(\{I,\gamma^+\cup\gamma^-\},\R)=\bigcap_{j\in\gamma^0}(\pi(e_j))^{\perp}
 =\R_{\geq 0}\gamma\cup\R_{\geq 0}(-\gamma).
 \]
 
 \item Denote $g:=\dim H$. Then the set $\Sigma(g)$ of top-dimensional cones in $\Sigma$ is in
 one-to-one correspondence with the set of vertices of $\pi(D_J)$.
 In fact, each cone $\sigma\in\Sigma(g)$ is the closure of a connected component of
 \[
 H\setminus\bigcup_{j\in J}(\pi(e_j))^{\perp}.
 \]
 Let
 \[
 Q:=\{j\in J;\;(\sigma,\pi(e_j))\subset\R_{\geq 0}\}.
 \]
 Then for $u\in\sigma$ we have
 \[
 s_D(u)=\sum_{j\in Q}(u,\pi(e_j))=(u,\pi(e(Q)))
 \]
 so that $\pi(e(Q))$ is the vertex of $\pi(D_J)$ corresponding to $\sigma$.
 
 If $\R_{\geq 0}\gamma_1,\ldots,\R_{\geq 0}\gamma_l$ are the one-dimensional faces of 
 the $g$-dimensional cone $\sigma$, and if
 $\Gamma$ is bridgeless so that $\pi(e_j)\neq 0$ for all $j\in J$ (cf.\ Lemma \ref{lem_bridge}),
 then
 \begin{align*}
 \sigma&=\R_{\geq 0}\gamma_1+\cdots+\R_{\geq 0}\gamma_l\\
 Q&=\bigcup_{1\leq k\leq l}\gamma_k^+=J\setminus\left(\bigcup_{1\leq k\leq l}\gamma_k^-\right).
 \end{align*}
 \end{itemize}
\item[(3)] In fact, we can say more thanks to Kotani-Sunada \cite[Theorem 1.3, (3)]{KotaniSunada2}:
Suppose $\Gamma$ is bridgeless in the sense of Proposition \ref{prop_bridgeless}.
Then $\Sigma(g)$ is in one-to-one correspondence with the set of strongly connected orientations
of $\Gamma$. 
Given a strongly connected orientation of $\Gamma$, the corresponding $\sigma\in\Sigma(g)$ is 
of the form
\[
\sigma=\R_{\geq 0}\gamma_1+\cdots+\R_{\geq 0}\gamma_l,
\]
where $\{\gamma_1,\ldots,\gamma_l\}$ is the set of elementary cycles $\gamma$ such that
\[
\gamma^{-}=\emptyset.
\]
(We can also show that $\pi(e(J))$ and $0$ are among the vertices of $\pi(D_J)$.)

Indeed, in connection with the \emph{homological directions of random walks} on $\Gamma$,
Kotani-Sunada \cite{KotaniSunada2} considers in $H$ the unit ball
\[
\overline{{\mathcal D}_0}:=\{x\in H;\;\norm{x}_1\leq 1\}
\]
with respect to the $l^1$-norm of $H$ induced by the $l^1$-norm
\[
\left\|\sum_{j\in J}c_je_j\right\|_1:=\sum_{j\in J}\abs{c_j}
\]
of $C$. Among others, Kotani and Sunada show that $\overline{{\mathcal D}_0}$
is a convex polytope with the set of vertices
\[
\left\{\frac{\gamma}{\norm{\gamma}_1};\;\gamma\text{ elementary cycles}\right\}
\]
and that the \emph{facets} of $\overline{{\mathcal D}_0}$ are in one-to-one correspondence
with the strongly connected orientations of $\Gamma$.
Since
\[
\norm{\gamma}_1=(\gamma,\gamma)
\]
for elementary cycles $\gamma$, we see that
\[
2\pi(V_J)=\{x\in H;\;(x,\gamma)\leq(\gamma,\gamma),\;\text{for all elementary cycles }\gamma\}
\]
is the polytope dual to $\overline{{\mathcal D}_0}$.
Thus the facets of $\overline{{\mathcal D}_0}$ are in one-to-one correspondence with
the vertices of $2\pi(V_J)$, hence with $\Sigma(g)$, since $\Sigma$ is the normal fan
of $2\pi(V_J)$ as well. $\Sigma$ is the fan consisting of the cones joining $0$ and
the proper faces of $\overline{{\mathcal D}_0}$.
\end{enumerate}
\end{remark}

\begin{proof}[Proof of Proposition $\ref{prop_associatedVoronoi}$]
(1) and (2) are consequences of \cite[Prop.\ 5.2, (1)]{OdaSeshadri} (in different notation) 
to the effect that
\[
\pi(V_J)=
\left\{x\in H;\;(x,\gamma)\leq\frac{(\gamma,\gamma)}{2},\text{ for all elementary cycles }\gamma\right\}
\]
with the defining inequalities above being irredundant so that $\pi(V_J)$ is the Voronoi cell 
centered at $0$ in $\Vor(H,H_{\Z})$.

Indeed, for each elementary cycle $\gamma$, we have
\[
(\gamma,\gamma)=\abs{\gamma^+}+\abs{\gamma^-}
\]
and
\[
(\pi(e(J)),\gamma)=(e(J),\gamma)=\abs{\gamma^+}-\abs{\gamma^-}
\]
(since $\gamma\in H$) so that $(x,\gamma)\leq(\gamma,\gamma)/2$ is equivalent to
\begin{align*}
\left(x+\frac{\pi(e(J))}{2},\gamma\right)&\leq\left(\frac{\pi(e(J))}{2},\gamma\right)+\frac{(\gamma,\gamma)}{2}\\
   &=\frac{\abs{\gamma^+}-\abs{\gamma^-}}{2}+\frac{\abs{\gamma^+}+\abs{\gamma^-}}{2}=\abs{\gamma^+}.
\end{align*}

Here is another direct proof of (2), which gives the proof of (3) as well:

If $\gamma$ is a cycle, then for $0\leq s_j\leq 1$ for all $j\in J$, we have
\begin{align*}
\left(\gamma,\pi\left(\sum_{j\in J}s_je_j\right)\right)&=\left(\gamma,\sum_{j\in J}s_je_j\right)\\
 &=\sum_{j\in J}s_j(\gamma,e_j)\leq\sum_{\substack{j\in J\\ (\gamma,e_j)>0}}(\gamma,e_j)=\abs{\gamma^+}.
\end{align*}
Hence 
\[
\pi(D_J)\subset\left\{x\in H;\;(x,\gamma)\leq\abs{\gamma^+},
\text{ for all cycle }\gamma\right\}.
\]

Suppose $H\ni\alpha\neq 0$ and $c\in\R$ determine a facet
$\{x\in H;\;(\alpha,x)=c\}\cap\pi(D_J)$ of $\pi(D_J)$. 
Let $\pi(e(Q_1)),\ldots,\pi(e(Q_r))$ for subsets $Q_1,\ldots,Q_r\subset J$ be the vertices of $\pi(D_J)$
on this facet, so that
\[
(\alpha,\pi(e(Q)))\leq c,\quad\text{ for all }Q\subset J
\]
with the equality holding if $Q=Q_1,\ldots,Q_r$.
Denote
\begin{align*}
\alpha^+&:=\{j\in J;\;(\alpha,e_j)>0\}\\
\alpha^0&:=\{j\in J;\;(\alpha,e_j)=0\}\\
\alpha^-&:=\{j\in J;\;(\alpha,e_j)<0\}.
\end{align*}
Hence $\alpha\in H_1(\{I,\alpha^+\cup\alpha^-\},\R)$ and is uniquely determined by
the condition
\[
(\alpha,\pi(e(Q)))=c,\quad\text{for }Q=Q_1,\ldots,Q_r.
\]
Hence $\dim H_(\{I,\alpha^+\cup\alpha^-\},\R)=1$ and $\alpha$ is a positive scalar multiple
of an elementary cycle $\gamma$ by Lemma-Definition \ref{lem-def_elementarycycle}.
Obviously, 
\[
(\gamma,\pi(e(Q)))\leq\abs{\gamma^+},\text{ for all }Q\subset J
\]
with the equality holding if and only if $\gamma^+\subset Q\subset\gamma^+\cup\gamma^0$.
In particular,
\[
\{Q_1,\ldots,Q_r\}\subset\{Q\subset J;\;\gamma^+\subset Q\subset\gamma^+\cup\gamma^0\}.
\]
\end{proof}

The following is a special case of a well-known result valid in the more general setting
where the inner product induced on $\Lambda$ is \emph{integral} and \emph{unimodular}.
Thanks are due to Mathieu Dutour-Sikiri\'{c} for pointing this out to the author.
In the following case of a graph, however, we have a recipe for constructing mutually dual $\Z$-bases as well.

\begin{lemma} \label{lem_duallattice}
$\pi(\Lambda)$ and $H_{\Z}:=H_1(\Gamma,\Z)=\Lambda\cap H$ are mutually dual lattices
in $H:=H_1(\Gamma,\R)$. Out of each spanning tree $T$ of $\Gamma$, 
mutually dual $\Z$-bases can be constructed.
\end{lemma}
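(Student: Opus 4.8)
The plan is to deduce the statement from a general fact about a unimodular (self-dual) lattice and the orthogonal projection onto a subspace, and then to make the dual $\Z$-bases explicit using the fundamental cycles attached to a spanning tree. First I would record the abstract input. Because $\{e_j\}_{j\in J}$ is orthonormal for $(\,,\,)$, the lattice $\Lambda=C_1(\Gamma,\Z)$ is self-dual in $C$: every $\Z$-valued linear functional on $\Lambda$ is $w\mapsto(\xi,w)$ for a unique $\xi\in\Lambda$. Writing $M:=H_{\Z}=\Lambda\cap H$, the claim I want is the identity
\[
\pi(\Lambda)=M^{\vee}:=\{x\in H;\;(x,y)\in\Z\text{ for all }y\in M\}
\]
(dual taken inside $H$). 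Since $H_{\Z}$ is a full-rank lattice in $H$ (as $H=H_1(\Gamma,\R)=H_{\Z}\otimes_{\Z}\R$), this identity already exhibits $\pi(\Lambda)$ as a lattice, and then reflexivity of duality for full-rank lattices in $H$ gives simultaneously $H_{\Z}=(\pi(\Lambda))^{\vee}$, which is precisely the assertion that $\pi(\Lambda)$ and $H_{\Z}$ are mutually dual.

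To prove the identity, the inclusion $\pi(\Lambda)\subseteq M^{\vee}$ is immediate: for $x\in\Lambda$ and $y\in H_{\Z}\subset H$ one has $(\pi(x),y)=(x,y)\in\Z$, because $x-\pi(x)\perp H$ and $x,y\in\Lambda$. For the reverse inclusion I would start from $z\in M^{\vee}$, i.e.\ $z\in H$ with $(z,y)\in\Z$ for all $y\in H_{\Z}$, and produce $\eta\in\Lambda$ with $\pi(\eta)=z$. The point is that $H_{\Z}=\Lambda\cap H$ is \emph{saturated} in $\Lambda$ (if $nw\in H$ with $w\in\Lambda$ and $n\neq0$, then $w\in H$), so $\Lambda/H_{\Z}$ is free and $H_{\Z}$ is a direct summand of $\Lambda$. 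Hence the functional $y\mapsto(z,y)$ on $H_{\Z}$ extends to a $\Z$-valued functional $\Phi$ on all of $\Lambda$ (for instance by zero on a complementary summand); self-duality produces $\eta\in\Lambda$ representing $\Phi$, and then $(\eta-z,y)=0$ for all $y\in H_{\Z}$, which forces $\eta-z\perp H$ since $H_{\Z}$ spans $H$, i.e.\ $\pi(\eta)=z$. This extension step, resting on the saturation of $H_{\Z}$ in $\Lambda$, is the one place where something genuinely has to be checked rather than merely bookkept, and I expect it to be the crux of the argument.

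Finally I would make the bases explicit. Fix a spanning tree $T$; the chords are $J\setminus T$, and by Lemma-Definition \ref{lem-def_elementarycycle} each chord $e_j$ $(j\in J\setminus T)$ determines an elementary cycle $\gamma_j=e_j+\sum_{t\in T}\varepsilon_te_t$ with $\varepsilon_t=0,\pm1$. A routine argument—subtract the chord contributions from an arbitrary cycle, after which what remains is supported on $T$ and hence is $0$—shows that $\{\gamma_j\}_{j\in J\setminus T}$ is a $\Z$-basis of $H_{\Z}$. I then claim the dual $\Z$-basis of $\pi(\Lambda)$ is $\{\pi(e_j)\}_{j\in J\setminus T}$: since $\gamma_k\in H$,
\[
(\pi(e_j),\gamma_k)=(e_j,\gamma_k)=\delta_{jk},
\]
because $\gamma_k$ contains the chord $e_k$ with coefficient $1$ and involves no other chord. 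As the $\gamma_k$ form an $\R$-basis of $H$, these relations determine the dual basis uniquely; since each $\pi(e_j)$ lies in $\pi(\Lambda)=H_{\Z}^{\vee}$ and satisfies them, $\{\pi(e_j)\}_{j\in J\setminus T}$ is exactly that dual basis, completing the construction.
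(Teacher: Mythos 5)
Your proposal is correct, but the way you obtain the reverse inclusion $H_{\Z}^{\vee}\subseteq\pi(\Lambda)$ differs from the paper. The paper never argues abstractly: after the easy inclusion $\pi(\Lambda)\subseteq H_{\Z}^{\vee}$ it invokes the decomposition $C_1(\Gamma,\Z)=C_1(J\setminus T,\Z)+\delta C_0(\Gamma,\Z)$, which (since $\pi$ kills $\delta C_0(\Gamma,\R)$) shows that $\pi(\Lambda)$ is already generated by $\{\pi(e_j)\}_{j\in J\setminus T}$; combined with the biorthogonality $(\gamma_j,\pi(e_{j'}))=\delta_{jj'}$ and the fact that $\{\gamma_j\}_{j\in J\setminus T}$ is a $\Z$-basis of $H_{\Z}$, this forces $\pi(\Lambda)$ to be exactly the dual lattice, so mutual duality is a consequence of the explicit bases. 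You run the logic in the opposite direction: you first prove $\pi(\Lambda)=H_{\Z}^{\vee}$ by the general mechanism (self-duality of $\Lambda$, saturation of $H_{\Z}=\Lambda\cap H$, extension of a $\Z$-valued functional across a direct-sum complement), and only then identify the dual bases. That is precisely the ``well-known result valid in the more general setting where the inner product on $\Lambda$ is integral and unimodular'' that the paper mentions just before the lemma and deliberately avoids in favor of the graph-specific recipe. Your extension step is the only point requiring genuine care and you handle it correctly (saturation gives freeness of $\Lambda/H_{\Z}$, hence a splitting); the trade-off is that the paper's argument is shorter and constructive, while yours generalizes beyond graphs and makes the spanning-tree bases a corollary rather than the engine of the proof.
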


\begin{proof}
Obviously,
\[
(\pi(\lambda),H_{\Z})=(\lambda,H_{\Z})\subset\Z,
\]
for all $\lambda\in\Lambda$ so that $\pi(\Lambda)$ is contained 
in the dual lattice of $H_{\Z}$.
Fix a spanning tree $T\subset J$. 
Then clearly
\[
C_1(\Gamma,\Z)=C_1(J\setminus T,\Z)+\delta C_0(\Gamma,\Z).
\]
Moreover for each $j\in J\setminus T$, there exists an elementary cycle $\gamma_j\in H_{\Z}$
with $\gamma_j-e_j\in C_1(T,\Z)$. We are done, 
since $\{\gamma_j\}_{j\in J\setminus T}$ is a $\Z$-basis of $H_{\Z}$ 
(cf., e.g., \cite[pp.\ 21--22]{OdaSeshadri}) and
\[
(\gamma_j,\pi(e_{j'}))=(\gamma_j,e_{j'})=
\left\{\begin{array}{lcl}
1& &j=j'\\
0& &j\neq j'.
\end{array}\right.
\]
\end{proof}

%%%%%%%%%%%%%%%%
\section{Crystals}

We now briefly recall the standard realization and crystals due to Kotani-Sunada \cite{KotaniSunada1}.

Regard a (connected) graph $\Gamma$ as a one-dimensional cell complex in an obvious way, and 
fix a base vertex $v_0$. 
Denote by $\Omega(\Gamma)$ the universal covering space of $\Gamma$ regarded as 
consisting of the homotopy classes of ``curves'' on $\Gamma$ starting from $v_0$, 
while $\pi_1(\Gamma,v_0)$ is the fundamental group of $\Gamma$ regarded as consisting of
the homotopy classes of ``curves'' on $\Gamma$ starting from $v_0$ and ending at $v_0$. 
(We here use ``curve'' for customarily used ``path'' to avoid confusion with that defined in
Definition \ref{def_walkpath}.)

We have a canonical surjective map from $\Omega(\Gamma)$ to the maximal abelian covering
$\Gamma^{\ab}$ of $\Gamma$.
The concatenation action $\pi_1(\Gamma,v_0)\times\Omega(\Gamma)\rightarrow\Omega(\Gamma)$
induces the canonical action
\[
H_1(\Gamma,\Z)\times\Gamma^{\ab}\rightarrow\Gamma^{\ab}.
\]

A walk $w$ on $\Gamma$ from $v_0$ gives rise to a point on $\Omega(\Gamma)$ (and on $\Gamma^{\ab}$)
as well as a ``curve'' on $\Omega(\Gamma)$ (and on $\Gamma^{\ab}$) ending at the point.
For simplicity, we denote also by $w$ the point as well as the ``curve''.
($\Omega(\Gamma)$ and $\Gamma^{\ab}$ are also regarded as ``infinite graphs'' with these points
as vertices.)

\begin{definition} \label{def_lambda}
For a walk $w$ on $\Gamma$ of the form
\[
\xymatrix@C=4em{
v_{i(0)}\ar[r]^{\varepsilon_{j(1)}e_{j(1)}}
 &v_{i(1)}\ar[r]^{\varepsilon_{j(2)}e_{j(2)}}
  &v_{i(2)}\ar[r]
   &
    & 
     & \\
 &
  &\cdots\ar[r]^{\varepsilon_{j(l)}e_{j(l)}}
   &v_{i(l)}\ar[r]
    &\cdots\ar[r]^(.45){\varepsilon_{j(m)}e_{j(m)}}
     &v_{i(m)}
}
\]
we define the element $\lambda(w)\in\Lambda=C_1(\Gamma,\Z)$ by
\[
\lambda(w):=
\left\{
\begin{array}{lcl}
0 & & \text{if }m=0\\
\sum_{1\leq l\leq m}\varepsilon_{j(l)}e_{j(l)}& & \text{if }m>0.
\end{array}
\right.
\]
\end{definition}

When a walk $w$ on $\Gamma$ starts from $v_0=v_{i(0)}$ in the above notation, 
we denote by $w_l$ the part of $w$ from $v_0$ to $v_{i(l)}$ for $l=0,\ldots,m$.
Then we have a \emph{$\Lambda$-polygonal curve} in $C:=C_1(\Gamma,\R)$
\[
\pc(w):=\bigcup_{1\leq l\leq m}\left[\lambda(w_{j(l-1)}),\lambda(w_{j(l)})\right]\subset C,
\]
where $[\lambda,\lambda']$ for $\lambda,\lambda'\in\Lambda$ is the 
\emph{$\Lambda$-line segment} in $C$ joining $\lambda$ and $\lambda'$.

Applying the orthogonal projection $\pi\colon C\rightarrow H:=H_1(\Gamma,\R)$, we have a
map from $\Omega(\Gamma)$ to the set of \emph{$\pi(\Lambda)$-polygonal curves}
$\pi(\pc(w))$ in $H$, which induces the \emph{standard realization}
\[
\sr\colon\Gamma^{\ab}\longrightarrow\Crystal(\Gamma):=
\bigcup_{w\text{ walks from $v_0$}}\pi(\pc(w))\subset H
\]
of Kotani-Sunada \cite{KotaniSunada1} (see also Sunada \cite{Sunada}, \cite{Sunada-book}).

The standard realization $\sr$ is equivariant with respect to the concatenation action of 
$H_{\Z}:=H_1(\Gamma,\Z)$ on $\Gamma^{\ab}$ and the translation action of $H_{\Z}$ on $H$.
The crystal $\Crystal(\Gamma)$ is a one-dimensional complex in $H$ of \emph{$\pi(\Lambda)$-line segments}
joining lattice points in $\pi(\Lambda)$ and is $H_{\Z}$-periodic, i.e., invariant
under the translation action of the sublattice $H_{\Z}$.

The standard realization $\sr$ collapses the bridges in the graph $\Gamma$, and
\[
\Crystal(\Gamma)/H_1(\Gamma,\Z)\cong\overline{\Gamma},
\]
where $\overline{\Gamma}$ is the graph obtained by the collapse of the bridges in $\Gamma$.

Thus to show that $\Vor(H,\xi_0+H_{\Z})$ for some $\xi_0\in H$ is ``hidden'' in $\Crystal(\Gamma)$,
we may assume without loss of generality that the graph $\Gamma$ is bridgeless.

More generally, Kotani-Sunada \cite{KotaniSunada1} considered also the crystal for 
free abelian coverings $\widetilde{\Gamma}$ of $\Gamma$ that are not necessarily maximal abelian.

Let $L\subset H_{\Z}$ be the image of $H_1(\widetilde{\Gamma},\Z)\rightarrow H_1(\Gamma,\Z)$
so that $L$ is a subgroup such that $\Gamma^{\ab}/L=\widetilde{\Gamma}$ and that
$H_{\Z}/L$ is the free abelian covering group for
$\widetilde{\Gamma}\rightarrow\Gamma$.

Denote
\begin{align*}
E'&:=\{x\in H;\;(x,L)=0\}\subset H\\
  &\cong\left(H_{\Z}/L\right)\otimes_{\Z}\R \\
 &\text{with }\pi'\colon C\rightarrow E'\text{ the orthogonal projection}.
\end{align*}
$E'$ has lattices 
\[
\Lambda\cap E'\subset\pi'(H_{Z})\subset\pi'(\Lambda).
\]
$\Lambda\cap E'$ and $\pi'(\Lambda)$ are \emph{mutually dual} lattices, since the inner product on $C$
is integral and unimodual with respect to the orthonormal lattice $\Lambda$.
The standard realization of Kotani-Sunada in this more general setting is
\[
\widetilde{\sr}\colon \widetilde{\Gamma}\longrightarrow\pi'(\Crystal(\Gamma))\subset 
E'\cong\left(H_{\Z}/L\right)\otimes_{\Z}\R.
\]
$\pi'(\Crystal(\Gamma))$ is a one-dimensional complex of \emph{$\pi'(\Lambda)$-line segments} 
joining lattice points in $\pi'(\Lambda)$, and is $\pi'(H_{\Z})$-periodic, i.e., invariant 
under the translation action of the sublattice $\pi'(H_{\Z})$.

We have a commutative diagram
\[
\xymatrix@C=5em@R=5ex{
\Gamma^{\ab}\ar[r]^{\sr}\ar[d]& \Crystal(\Gamma)\ar[d]^{\pi'} \\
\widetilde{\Gamma}=\Gamma^{\ab}/L\ar[r]^{\widetilde{\sr}} & \pi'(\Crystal(\Gamma)).
}
\]
$\widetilde{\sr}$ induces 
\[
\widetilde{\Gamma}/(H_{\Z}/L)\longrightarrow\pi'(\Crystal(\Gamma))/\pi'(H_{\Z}),
\]
which need not be bijective, even if $\Gamma$ is bridgeless so that
\[
\Gamma^{\ab}/H_{\Z}\longrightarrow\Crystal(\Gamma)/H_{\Z}
\]
is bijective. 

%%%%%%%%%%%%%%%%
\section{Main theorem}

We are now ready to prove the following theorem which was conjectured 
at the 2010 Annual Meeting of the Japan Society of Industrial and Applied Mathematics
held at Meiji University in Tokyo on September 6, 2010:

\begin{theorem} \label{thm_maintheorem}
Let $\Gamma=\{I, J\}$ be a bridgeless graph with $\dim H_1(\Gamma,\R)\geq 2$.
Then after a strongly connected re-orientation and a change of the base vertex $v_0$ if necessary, 
the crystal obtained as the standard realization of the maximal abelian covering $\Gamma^{\ab}$
does not intrude the interiors of the top-dimensional Voronoi cells in
\[
\Vor\left(H_1(\Gamma,\R),\pi\left(\frac{\sum_{j\in J}e_j}{2}\right)+H_1(\Gamma,\Z)\right),
\]
that is, for some $r<\dim H_1(\Gamma,\R)$ we have\textup{:}
\[
\Crystal(\Gamma)\subset
\Sk^r\left(\Vor\left(H_1(\Gamma,\R),\pi\left(\frac{\sum_{j\in J}e_j}{2}\right)+H_1(\Gamma,\Z)\right)\right),
\]
where $\Sk^r$ denotes the $r$-skeleton. Thus a Voronoi tiling is ``hidden'' in the crystal $\Crystal(\Gamma)$.
\end{theorem}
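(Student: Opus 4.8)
The plan is to establish the sharper statement $\Crystal(\Gamma)\subset\Sk^{g-1}\bigl(\Vor(H,\xi_0+H_{\Z})\bigr)$ with $g:=\dim H\geq 2$ and $\xi_0:=\pi(e(J)/2)$, by showing that \emph{every edge of the crystal lies in a single facet-hyperplane of the Voronoi tiling}. First I would set up the reductions. By Lemma~\ref{lem_bridge} bridgelessness gives $\pi(e_j)\neq0$ for all $j$, and by Proposition~\ref{prop_bridgeless} (Robbins) I may fix a strongly connected orientation. The crystal is the $H_{\Z}$-equivariant union of the $\pi(\Lambda)$-segments $[p,\,p+\pi(e_j)]$, where $p$ ranges over the realizations $\pi(\lambda(w))$ of walks $w$ from $v_0$ ending at $\source(e_j)$. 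Any two such walks differ by a closed walk, so their endpoints differ by an element of $H_{\Z}$; since $\Sk^{g-1}$ is closed and $H_{\Z}$-invariant, it suffices to treat one segment over each $e_j$. All endpoints lie in $\pi(\Lambda)$, which by Lemma~\ref{lem_duallattice} is the lattice dual to $H_{\Z}$, so $(p,\gamma)\in\Z$ for every $p$ and every cycle $\gamma\in H_{\Z}$.

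Next comes the facet description. By Proposition~\ref{prop_associatedVoronoi}(2) the facet-hyperplanes of $\Vor(H,\xi_0+H_{\Z})$ are exactly the translates
\[
\left\{x\in H;\ (x,\gamma)=\abs{\gamma^+}+(h,\gamma)\right\},\qquad \gamma\text{ elementary},\ h\in H_{\Z},
\]
and each such hyperplane is tiled facet-to-facet by Voronoi cells, hence is contained in $\Sk^{g-1}$. Thus it is enough to attach to each edge $e_j$ an elementary cycle $\gamma_j$ satisfying two conditions: \emph{(a)} $(e_j,\gamma_j)=0$, i.e.\ $e_j\in\gamma_j^0$, so that $[p,\,p+\pi(e_j)]$ is parallel to the hyperplane; and \emph{(b)} $(p,\gamma_j)\equiv\abs{\gamma_j^+}\pmod{(H_{\Z},\gamma_j)}$, so that the segment actually lies in it. Granting (a) and (b), the whole segment sits in one facet-hyperplane, hence in $\Sk^{g-1}$.

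For (a) I would invoke the dimension hypothesis directly. Since $\Gamma$ is bridgeless, $e_j$ lies in a circuit, so $\Gamma\setminus\{e_j\}$ is connected with first Betti number $g-1\geq1$; it therefore still carries a circuit, and any elementary cycle $\gamma_j$ supported in $\{I,J\setminus\{j\}\}$ has $e_j\in\gamma_j^0$. This is precisely the step that collapses when $g=1$, which is why the theorem excludes that case.

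The hard part is (b), and it is exactly here that the \emph{choice of base vertex} together with the strongly connected orientation becomes indispensable. The subtlety is that $(H_{\Z},\gamma)$ may be a proper subgroup $d\Z\subsetneq\Z$: for the ``bowtie'' of two triangles sharing one vertex, the two triangle-cycles $\gamma_1,\gamma_2$ are orthogonal with $(\gamma_i,\gamma_i)=3$, so $(H_{\Z},\gamma_i)=3\Z$, and for a badly placed $v_0$ a crystal edge over one triangle runs along a line $(x,\gamma)=\mathrm{const}\notin\abs{\gamma^+}+3\Z$ and pierces a cell interior; moving $v_0$ to the shared vertex repairs all the congruences simultaneously. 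My plan for (b) is therefore to use the strongly connected orientation to pick, for each vertex $v_i$, a \emph{directed} path $P_i$ from $v_0$ to $v_i$, so that the offset $\pi(\lambda(P_i))=\pi(e(P_i))$ satisfies $(\pi(e(P_i)),\gamma_j)=\abs{P_i\cap\gamma_j^+}-\abs{P_i\cap\gamma_j^-}$, and then to reduce (b) to a statement about the circuit $\gamma_j^+\cup\gamma_j^-$ and the components of $\Gamma\setminus(\gamma_j^+\cup\gamma_j^-)$, placing $v_0$ so that every directed path crosses that circuit a number of times that is constant modulo $d=[\,\Z:(H_{\Z},\gamma_j)\,]$. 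The centering $\xi_0=\pi(e(J)/2)$, for which $0$ and $\pi(e(J))$ are antipodal vertices of $\pi(D_J)$ by Remark~\ref{rem_bridgeless}, is what makes $\abs{\gamma_j^+}$ the correct residue. Once (a) and (b) hold for each $e_j$, every crystal edge lies in $\Sk^{g-1}$, and $H_{\Z}$-equivariance propagates this to all of $\Crystal(\Gamma)$, giving the theorem with $r=g-1$; I would finally sanity-check the chosen $v_0$ and orientation against the stated models—rhombic dodecahedra hidden in diamond and truncated octahedra in $K_4$.
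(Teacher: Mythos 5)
Your overall architecture---reduce modulo $H_{\Z}$ to one segment per edge, then exhibit for each edge an elementary cycle orthogonal to it whose facet inequality is saturated---is the right one, and your observations about condition (a) and about the role of the base vertex (the bowtie example) are correct. But the reduction you build everything on contains a genuine error: a facet-hyperplane $\{x\in H;\;(x,\gamma)=\abs{\gamma^+}+(h,\gamma)\}$ is \emph{not} contained in $\Sk^{g-1}$ of the Voronoi tiling (here $g=\dim H$). It contains a facet of the cell $\pi(D_J)+h$, but beyond that facet it passes through the interiors of other cells. Already for graphene, where $H_{\Z}$ is an $A_2$-lattice and the Voronoi cells are regular hexagons, the line through one edge of a hexagon, extended past the edge's endpoints, enters the open cell centered at the third lattice point around each endpoint. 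So your conditions (a) and (b)---the segment is parallel to, and lies on, such a hyperplane---do not suffice; you must additionally locate the segment inside an actual Voronoi cell. This is precisely the ingredient the paper's proof supplies and your write-up omits: the representative walk is chosen to be a \emph{directed path} $p$ from $v_0$, which forces $\lambda(p)$ and $\lambda(p)+e$ to lie in the cube $D_J=\sum_{j\in J}[0,1]e_j$, hence $\pi(\lambda(p))$ and $\pi(\lambda(p))+\pi(e)$ lie in $\pi(D_J)$; combined with the exact equality $(\gamma,\lambda(p))=\abs{\gamma^+}$ and with $(\gamma,e)=0$, the segment lies in the genuine facet $\{x;\;(x,\gamma)=\abs{\gamma^+}\}\cap\pi(D_J)$, which is in $\Sk^{g-1}$.

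Second, your condition (b) is only a plan, and the plan as stated (``place $v_0$ so that every directed path crosses the circuit a number of times constant modulo $d$'') is a uniform claim over all directed paths and all elementary cycles that you neither prove nor reduce to something checkable. The paper proves a pointwise statement instead: Lemma \ref{lem_strongconnectivity} shows that strong connectivity together with $\dim H\geq 2$ yields a subgraph $\Gamma_1$ (two directed circuits meeting at one vertex) or $\Gamma_2$ (three internally disjoint paths with one reversed); the base vertex is taken to be $v^{\ast}$; and for each edge $e$ one \emph{specific} directed path $p$ to $\source(e)$ and one specific elementary cycle $\gamma$ (built from $\gamma_2$, or from $p_1,p_2,p_3$) are exhibited with $(\gamma,\lambda(p))=\abs{\gamma^+}$ exactly and $(\gamma,e)=0$. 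A separate Lemma \ref{lem_directedpath} shows every walk is congruent mod $H_{\Z}$ to a directed path, so one representative per edge suffices. To repair your argument you would need both to restore the containment in $\pi(D_J)$ and to carry out a concrete construction of this kind for (b).
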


\begin{lemma} \label{lem_strongconnectivity}
Let $\Gamma$ be a strongly connected graph with $\dim H_1(\Gamma,\R)\geq 2$.
Then $\Gamma$ has one of the following \textup{(}not necessarily spanning\textup{)} 
subgraphs  $\Gamma_1$ and $\Gamma_2$\textup{:}
 \begin{itemize}
 \item $\Gamma_1$ consists of two directed circuits $\gamma_1$ and $\gamma_2$ meeting only at a 
 vertex $v^{\ast}$.
 
 \item $\Gamma_2$ consists of three directed paths $p_1$, $p_2$, $p_3$ between two distinct
 vertices $v^{\ast}$ and $v^{\ast\ast}$ with $p_1$, $p_2$, $p_3$ disjoint except at the
 end vertices $v^{\ast}$ and $v^{\ast\ast}$, where $p_1$ is a directed path from $v^{\ast\ast}$
 to $v^{\ast}$, while $p_2$ and $p_3$ are paths from $v^{\ast}$ to $v^{\ast\ast}$.
 \end{itemize}
%%%%%%%%%%%%%%%%%%%%
\[
\xymatrix@C=2em{
\gamma_1 \ar@/_2pc/[r]& v^{\ast} \ar@{-}@/_2pc/[l]  \ar@{-}@/^2pc/[r] & \gamma_2 \ar@/^2pc/[l]\\
 &\Gamma_1 &
 }
 \qquad\qquad
 \xymatrix{
 v^{\ast}\ar[rr]|{p_2} \ar@/_2pc/[rr]|{p_3} & & v^{\ast\ast}\ar@/_2pc/[ll]|{p_1}\\
 &\Gamma_2&
 }
\]
%%%%%%%%%%%%%%%%%%%%
\end{lemma}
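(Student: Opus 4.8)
The plan is to build $\Gamma$ up from a single directed circuit by one \emph{ear}, and then read off the configuration $\Gamma_1$ or $\Gamma_2$ according to whether that ear is closed or open. First I would record that, $\Gamma$ being connected, $\dim H_1(\Gamma,\R)=\abs{J}-\abs{I}+1\geq 2$, so $\Gamma$ is not a single directed circuit. By strong connectivity (Proposition \ref{prop_bridgeless}) every edge lies on a directed circuit in the sense of Definition \ref{def_walkpath}; fix one such directed circuit $\gamma_1$ and let $U$ be its vertex set. Since $\dim H_1\geq 2$, there is an edge $e_0$, from a vertex $a$ to a vertex $b$, that is not an edge of $\gamma_1$ (otherwise $\Gamma=\gamma_1$ would force $\dim H_1=1$).

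Next I would manufacture a directed ear carrying $e_0$. Using strong connectivity, choose a directed path from $b$ that first meets $U$ again, and a directed path into $a$ that last leaves $U$; splicing these onto $e_0$ gives a directed walk from some $x\in U$ to some $y\in U$ all of whose interior vertices lie off $U$. Shortcutting away any repeated interior vertex turns this into a directed path $P$ from $x$ to $y$ whose interior is disjoint from $U$; because the only edge of the walk that can have both endpoints on $U$ is $e_0$ itself, $P$ is never a sub-arc of $\gamma_1$. This ear $P$ is the crux: the one genuinely delicate point of the whole argument is verifying that its interior stays off $\gamma_1$ and that it really differs from $\gamma_1$, after which the degenerate possibilities (a single-edge chord, or a loop) cause no trouble.

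If $P$ is \emph{closed}, i.e.\ $x=y$, then $P$ is a directed circuit meeting $\gamma_1$ only at the single vertex $v^{\ast}:=x$, since its interior is off $U$; hence $\gamma_1$ together with $\gamma_2:=P$ is exactly the subgraph $\Gamma_1$. If $P$ is \emph{open}, i.e.\ $x\neq y$, the vertices $x,y$ split the directed circuit $\gamma_1$ into two directed arcs, say $A_1$ running $x\to y$ and $A_2$ running $y\to x$ consistently with the orientation of $\gamma_1$. Then $A_1$, $A_2$, $P$ are three internally disjoint directed paths between $x$ and $y$, and exactly one of the two arcs runs in the same direction as $P$. Relabelling $v^{\ast},v^{\ast\ast}$ so that the two co-directed paths run from $v^{\ast}$ to $v^{\ast\ast}$ and the remaining one from $v^{\ast\ast}$ to $v^{\ast}$ exhibits $\Gamma_2$ with the required orientations, completing the case analysis.
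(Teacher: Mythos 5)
Your argument is correct. The engine is the same as the paper's: a directed circuit $\gamma_1$ supplied by strong connectivity, plus a directed attachment back to it, with the closed/open dichotomy of that attachment producing $\Gamma_1$ or $\Gamma_2$. In fact your ear $P$ (a directed path from $x\in U$ to $y\in U$ with interior off $U$) is exactly what the paper constructs in its case (iii-b) as the concatenation $p''+p'$ through a vertex $v$ off the circuit, and your shortcutting step plays the role of the paper's replacement of $v$ by the vertex $v'''$ where $p'$ and $p''$ first meet. What your organization buys is uniformity: by anchoring the ear on an edge $e_0\notin\gamma_1$ rather than on a vertex off $\gamma_1$, the paper's separate preliminary cases (one vertex, two vertices, Hamiltonian circuit) become the degenerate ears (a loop at a vertex of $U$, or a chord), so no case split on the number of vertices is needed; your observation that the only edge of the spliced walk with both endpoints in $U$ is $e_0$ itself is the right way to rule out $P$ collapsing into an arc of $\gamma_1$. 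Two small points deserve an explicit sentence: when $x=y$ the shortcut object is a directed circuit rather than a path, and one should note that since $x$ never occurs as an interior vertex of the walk, shortcutting cannot collapse it to the trivial walk (so $\gamma_2$ really has edges, none of them in $\gamma_1$); and the claim that some edge avoids $\gamma_1$ uses connectedness to conclude that if every edge lay on $\gamma_1$ then every vertex would too. With those observations both cases close as you say.
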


\begin{proof} \mbox{}
(i) If $\Gamma$ has only one vertex, then $\Gamma$ consists of at least two loops so that it has
a subgraph $\Gamma_1$.

(ii) Suppose $\Gamma$ has only two vertices.
If there are two loops at a vertex, then they form a subgraph $\Gamma_1$.
If there exists only one loop at a vertex, then there exist two edges in opposite directions connecting
the vertex with the other by the strong-connectivity assumption, 
and we obviously have a subgraph $\Gamma_1$.
If $\Gamma$ has no loops, then there exist at least three edges connecting 
the two vertices with at least two edges in opposite directions by our assumptions.
Thus $\Gamma$ has a subgraph $\Gamma_2$.

(iii) Suppose now that $\Gamma$ has at least three vertices. By assumption, we can choose a directed
circuit $\gamma$.

(iii-a) If $\gamma$ is Hamiltonian (i.e., passes through all the vertices of $\Gamma$), then 
since $\dim H_1(\Gamma,\R)\geq 2$, there exists either a loop or an edge $e$ connecting two distinct
vertices. Obviously, we have a subgraph $\Gamma_1$ in the former case and $\Gamma_2$ in the
latter case.

(iii-b) If $\gamma$ is not Hamiltonian, take a vertex $v$ not on $\gamma$.
By assumption, there exists a directed path $p'$ from $v$ to a vertex $v'$ on $\gamma$
with $p'$ and $\gamma$ disjoint except at $v'$. There also exists a directed path $p''$ to $v$ from
a vertex $v''$ on $\gamma$ with $p''$ and $\gamma$ disjoint except at $v''$.
If $v'\neq v''$, then we may assume that $p'$ and $p''$ are disjoint except at $v$. 
Indeed, otherwise, there certainly exists a vertex $v'''$ that is closest to $v'$ on $p'$ 
as well as to $v''$ on $p''$. We then replace $v$ by $v'''$. 
In this case, $\Gamma$ has a subgraph $\Gamma_2$.
If $v'=v''$, then we may again assume that $p'$ and $p''$ are disjoint except
at $v$ and $v'$. In this case $\Gamma$ has a subgraph $\Gamma_1$.
\[
\xymatrix@C=3em@R=1ex{
&& \\
                  & v' \ar@{-}@/^2pc/[rd]            & \\
v\ar@/^/[ru]|{p'} &                                  &\gamma \ar@/^2pc/[ld] \\
                  & v''\ar@/^/[lu]|{p''} \ar@/^/[uu] & \\
}
\qquad\qquad
\xymatrix@R=1ex@C=2em{
& & \\
& & \\
v \ar@/^1pc/[r]|{p'} & v' \ar@/^1pc/[l]|{p''} \ar@{-}@/^2pc/[r] & \gamma \ar@/^2pc/[l]\\
& & \\
}
\]
\end{proof}

To prove Main Theorem \ref{thm_maintheorem}, let us endow $\Gamma$ with a strongly connected
orientation, which is possible by Proposition \ref{prop_bridgeless}.
Since $\dim H_1(\Gamma,\R)\geq 2$, Lemma \ref{lem_strongconnectivity} guarantees the existence
of a subgraph $\Gamma_1$ or $\Gamma_2$. In either case, let $v_0:=v^{\ast}$ be the base vertex
for the standard realization of the crystal. 
Recall our definition in Definition \ref{def_lambda} of $\lambda(w)$ for a walk on $\Gamma$.

\begin{lemma} \label{lem_directedpath}
Under the choice of the strongly connected orientation and the base vertex $v_0$, for any walk
$w$ on $\Gamma$ from $v_0$ there exists a directed path $p$ from $v_0$ to the same end vertex
as that of $w$ such that $\lambda(w)-\lambda(p)\in H_1(\Gamma,\Z)$. 
Moreover, for directed paths $p$ and $p'$ from $v_0$ to $v$, we have
$\lambda(p)-\lambda(p')\in H_1(\Gamma,\Z)$.
\end{lemma}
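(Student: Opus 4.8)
The plan is to reduce both assertions to a single telescoping computation of the boundary $\partial(\lambda(w))$ of a walk, combined with the strong connectivity of the chosen orientation. Since the crystal is built from the projected polygonal curves $\pi(\pc(w))$ and is $H_{\Z}$-periodic, controlling $\lambda(w)$ modulo $H_1(\Gamma,\Z)$ is exactly what lets one replace an arbitrary walk by a directed path.

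First I would compute, for an arbitrary walk $w$ from $v_{i(0)}$ to $v_{i(m)}$,
\[
\partial(\lambda(w))=\sum_{l=1}^{m}\partial\bigl(\varepsilon_{j(l)}e_{j(l)}\bigr)
=\sum_{l=1}^{m}\bigl(v_{i(l-1)}-v_{i(l)}\bigr)=v_{i(0)}-v_{i(m)}.
\]
The key point is that $\partial\bigl(\varepsilon_{j(l)}e_{j(l)}\bigr)=\source(\varepsilon_{j(l)}e_{j(l)})-\target(\varepsilon_{j(l)}e_{j(l)})=v_{i(l-1)}-v_{i(l)}$ holds regardless of whether $\varepsilon_{j(l)}=+1$ or $-1$, because reversing an edge swaps its source and target and simultaneously flips the sign in $\partial$. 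Summing over the steps, the intermediate vertices cancel in pairs and only the endpoints survive; this sign bookkeeping is the one place that needs a moment's care, but it is routine. In particular $\partial(\lambda(w))=\source(w)-\target(w)$ depends only on the endpoints of $w$.

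Both statements now follow at once. For the second, any two directed paths $p,p'$ from $v_0$ to $v$ are walks with the same source and target, so $\partial(\lambda(p)-\lambda(p'))=0$ and hence $\lambda(p)-\lambda(p')\in\ker\partial=H_1(\Gamma,\Z)$. For the first, given a walk $w$ from $v_0$ to $v:=\target(w)$, the strong connectivity furnished by Proposition \ref{prop_bridgeless}, (4), provides a directed path $p$ from $v_0$ to $v$; since $w$ and $p$ again share both endpoints, the same computation gives $\lambda(w)-\lambda(p)\in\ker\partial=H_1(\Gamma,\Z)$. Note that the argument uses nothing about the distinguished base vertex $v_0=v^{\ast}$ or the subgraphs $\Gamma_1,\Gamma_2$ beyond strong connectivity, so there is no genuine obstacle here: once the telescoping identity is in hand, both conclusions are one-line consequences of $H_1(\Gamma,\Z)=\ker\partial$.
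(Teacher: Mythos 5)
Your proof is correct, and it takes a genuinely different (and shorter) route than the paper's. You observe that $\partial(\varepsilon_{j(l)}e_{j(l)})=v_{i(l-1)}-v_{i(l)}$ irrespective of the sign $\varepsilon_{j(l)}$, so the telescoping sum gives $\partial(\lambda(w))=v_{i(0)}-v_{i(m)}$; hence any two walks with the same endpoints have $\lambda$'s differing by an element of $\ker\partial=H_1(\Gamma,\Z)$, and strong connectivity merely supplies the existence of a directed path with the prescribed endpoints (the trivial path handling the case where the walk returns to $v_0$). The paper instead argues by explicit surgery: it locates the first reversed edge $-e$ in the walk, uses strong connectivity to produce a directed path $p'$ replacing it, splits into cases according to whether $p'$ meets the initial segment $p_1$, and inducts on the number of edges carrying a minus sign. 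That construction is more hands-on and rehearses the path-manipulation techniques reused in the proof of the main theorem, but for the lemma as stated it proves nothing beyond what your homological argument yields, and your version avoids the case analysis about intersections entirely. Both approaches are valid; yours is the more economical.
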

 
\begin{proof} \mbox{}
The second assertion is obvious.

As for the first, suppose
the walk $w$ starts out with a path $p_1$ from $v_0$ to a vertex $v$ followed by
an edge $-e$ from $v$ to $v'$:
\[
w=p_1+(-e)+q,
\]
where $q$ is the part of the walk $w$ from $v'$ onward.
Since $e$ is an edge from $v'$ to $v$, there exists a
directed path $p'$ from $v$ to $v'$ by the strong connectivity. 

If $p'$ and $p_1$ are disjoint except at the vertex $v$, then replace the beginning of the walk $w$ by
$p_1+p'$ and consider
\[
w':=p_1+p'+q.
\]
Obviously, $\lambda(w)-\lambda(w')\in H_1(\Gamma,\Z)$.

If $p'$ and $p_1$ have common edges, there certainly exists a subpath $p''$ of $p'$ 
(the ending part of the directed path $p'$)
from a vertex $v''$ on $p_1$ to $v'$ such that $p''$ and $p_1$ are disjoint except at $v''$.
Denote by $p_1'$ (resp.\ $p_1''$) the subpath of $p_1$ from $v_0$ to $v''$ 
(resp.\ from $v''$ to $v$). 
\[
\xymatrix@C=2em{
v_0 \ar[rr]|{p_1'}& &\ar[rr]|{p_1''}\ar[rrrd]|{p''} v'' && v\ar[rd]^{-e} & &\\
    &&    & &   & v'\ar@{--}[r]&
}
\]
Then replace the beginning of the walk $w$ by $p_1'+p''$ and consider
\[
w'':=p_1'+p''+q.
\]
Obviously, $\lambda(w)-\lambda(w'')\in H_1(\Gamma,\Z)$.

The rest of the proof follows by induction on the number of edges in $w$ appearing
with the minus sign.
\end{proof}

\begin{proof}[Proof of Main Theorem $\ref{thm_maintheorem}$] \mbox{}
By Proposition \ref{prop_associatedVoronoi}, (1), the $H_{\Z}$-translates of $\pi(D_J)$
form a facet-to-facet tiling of $H$ so that
\[
H=\pi(D_J)+H_{\Z},
\]
that is, $\pi(D_J)$ is a ``fundamental domain'' with respect to the translation action of
$H_{\Z}$ on $H$.

By Lemma \ref{lem_directedpath}, it suffices to consider
only directed paths on $\Gamma$ from $v_0$.
Since the images by $\pc$ in $C$ of the directed paths from $v_0$ are contained in $D_J$, 
their projections under $\pi$ are contained in $\pi(D_J)$, whose facets are of the form
\[
\{x\in H;\;(\gamma,x)=\abs{\gamma^+}\}\cap\pi(D_J)\qquad\text{for an elementary cycle $\gamma$}.
\]
To show that the images under $\pi$ of the directed paths from $v_0$ are on the boundary of
$\pi(D_J)$,  it suffices to show the following:

For any edge $e$ and an appropriate choice of a directed path $p$ from $v_0$ to $v:=\source(e)$,
\[
p\colon\quad 
\xymatrix{
v_0\ar[r]^{e_1} & v_1\ar[r]^{e_2} & v_2\ar[r] &\cdots\ar[r]^(.4){e_m} & v_m=v,
}
\]
there exists an elementary cycle $\gamma$ such that
\begin{align*}
(\gamma,\lambda(p))&=\abs{\gamma^+}\\
(\gamma,e)&=0,
\end{align*}
where
\[
\lambda(p):=e_1+e_2+\cdots+e_m
\]
as in Definition \ref{def_lambda}.
By Lemma \ref{lem_strongconnectivity}, we have two cases:

(Case 1) $\Gamma$ contains a subgraph $\Gamma_1$ with two directed circuits $\gamma_1$ and $\gamma_2$ 
meeting only at the common vertex $v_0$.

If $v:=\source(e)$ is on $\gamma_1$, choose the directed path $p$ from $v_0$ to $v$ to be the
directed path along $\gamma_1$ from $v_0$ to $v$. Let $\gamma:=-\lambda(\gamma_2)$. Then we are done, since
\begin{align*}
(\gamma,\lambda(p))&=0=\abs{\gamma^+}\\
(\gamma,e)&=0.
\end{align*}
\[
\xymatrix@C=3em@R=3em{
 &v \ar[l]^e & & \\
 &\gamma_1 \ar@{--}@/^/[u] \ar@{-->}@/_2pc/[r] & v_0 \ar@/_1pc/[lu]|{p} \ar@{-}@/^2pc/[r]
                                                & \gamma_2 \ar@/^2pc/[l] \\
 & & & \\
}
\qquad\qquad 
\xymatrix@C=3em@R=3em{
 &v\ar[l]^e &v'\ar@/_1pc/[l]|{p''} & & \\
 & &\gamma_1\ar@{--}@/^/[u] \ar@{-->}@/_2pc/[r] &v_0 \ar@/_1pc/[lu]|{p'} \ar@{-}@/^2pc/[r] 
                                                 &\gamma_2 \ar@/^2pc/[l]
}
\]

If $v$ is on $\gamma_2$, we are done in a similar manner.

Suppose $v$ is neither on $\gamma_1$ nor on $\gamma_2$. 
Without loss of generality, we may assume the existence of a directed path $p''$ to $v$ 
from a vertex $v'$ on $\gamma_1$ such that $p''$ and $\Gamma_1$ are disjoint except at $v'$.
Denote by $p'$ the directed path along $\gamma_1$ from $v_0$ to $v'$, and let $p:=p'+p''$ be the direct
path $p'$ followed by $p''$. Then $\gamma:=-\lambda(\gamma_2)$ again satisfies
\begin{align*}
(\gamma,\lambda(p))&=0=\abs{\gamma^+}\\
(\gamma,e)&=0,
\end{align*}
and we are done.

(Case 2) $\Gamma$ contains a subgraph $\Gamma_2$ consisting of three paths 
$p_1$, $p_2$, $p_3$, disjoint except at the end vertices, 
between $v_0$ and $v^{\ast\ast}$ with $p_1$ directed from 
$v^{\ast\ast}$ to $v_0$, while $p_2$ and $p_3$ are directed from $v_0$ to $v^{\ast\ast}$.
\[
\xymatrix@C=2em{
\Gamma_2\colon& & v_0 \ar[rr]|{p_2} \ar@/_2pc/[rr]|{p_3}  && v^{\ast\ast} \ar@/_2pc/[ll]|{p_1}
}
\]

For an edge $e$ with $v:=\source(e)$, we have four cases to consider:
\begin{enumerate}
\item[(i)] $v$ is on $p_1$. Let $p'$ be the directed path along $p_1$ from $v^{\ast\ast}$ to $v$,
and let $p:=p_2+p'$ be the directed path $p_2$ followed by $p'$. 
Then the elementary cycle $\gamma:=\lambda(p_2)-\lambda(p_3)$
satisfies
\begin{align*}
(\gamma,\lambda(p))&=(\text{length of $p_2$})=\abs{\gamma^+}\\
(\gamma,e)&=0,
\end{align*}
and we are done.
\[
\xymatrix@C=2em@R=2em{
 & & \\
 & v \ar[lu]^e \ar@{-->}@/_1pc/[dl]& \\
v_0\ar[rr]|{p_2}\ar@/_2pc/[rr]|{p_3} & & v^{\ast\ast} \ar@/_1pc/[lu]|{p'}\\
 &\text{(i)} &
}
\qquad\qquad
\xymatrix@C=2em@R=2em{
v_0 \ar[rr]|{p_2} \ar@/_1pc/[rd]|{p} & & v^{\ast\ast} \ar@/_2pc/[ll]|{p_1} \\
    &v \ar[dr]^e \ar@{-->}@/_1pc/[ru] & \\
    &\text{(ii)} &
}
\]

\item[(ii)] $v$ is on $p_3$. Let $p$ be the directed path along $p_3$ from $v_0$ to $v$.
Then the elementary cycle $\gamma:=-\lambda(p_1)-\lambda(p_2)$ satisfies
\begin{align*}
(\gamma,\lambda(p))&=0=\abs{\gamma^+}\\
(\gamma,e)&=0,
\end{align*}
and we are done.

\item[(iii)] $v$ is on $p_2$. The proof is similar to that for (ii).

\item[(iv)] $v$ is not on $p_1$, $p_2$, $p_3$.
There certainly exists a directed path $p'$ from a vertex $v'$ of $\Gamma_2$ to $v$
such that $p'$ and $\Gamma_2$ are disjoint except at $v'$. We have three cases to consider:
 \begin{enumerate}
 \item[(iv-a)] $v'$ is on $p_1$. Let $p_1'$ be the directed path along $p_1$ from
 $v^{\ast\ast}$ to $v'$ and let the directed path $p:=p_2+p_1'+p'$ from $v_0$ to $v$
 be $p_2$ followed by $p_1'$ and then by $p'$. 
 Then the elementary cycle $\gamma:=\lambda(p_2)-\lambda(p_3)$ satisfies
 \begin{align*}
 (\gamma,\lambda(p))&=(\text{length of $p_2$})=\abs{\gamma^+}\\
 (\gamma,e)&=0,
 \end{align*}
 and we are done.
 \[
 \xymatrix@C=2em{
  & & v\ar[dr]^e & \\
  &v'\ar@/^/[ur]|{p'} \ar@{-->}@/_/[dl] & & \\
 v_0 \ar[rr]|{p_2} \ar@/_2pc/[rr]|{p_3} & &v^{\ast\ast} \ar@/_/[ul]|{p_1'} & \\
  & & \text{(iv-a)} &
 }
 \qquad\qquad
 \xymatrix@C=2em{
 v_0\ar[rr]|{p_2} \ar@/_/[rd]|{p_3'} &                    & v^{\ast\ast}\ar@/_2pc/[ll]|{p_1} & \\
                                     & v'\ar@/_/[rd]|{p'} \ar@{-->}@/_/[ru] &             & \\
                                     &                    & v \ar[ru]^e                      & \\
                                     & \text{(iv-b)} & &
 }
 \]
 
 \item[(iv-b)] $v'$ is on $p_3$. Let $p_3'$ be the directed path along $p_3$ from $v_0$ to $v'$
 and let the directed path $p:=p_3'+p'$ from $v_0$ to $v$ be $p_3'$ followed by $p'$.
 Then the elementary cycle $\gamma:=-\lambda(p_1)-\lambda(p_2)$ satisfies
 \begin{align*}
 (\gamma,\lambda(p))&=0=\abs{\gamma^+}\\
 (\gamma,e)&=0,
 \end{align*}
 and we are done.
 
 \item[(iv-c)] $v'$ is on $p_2$. The proof is similar to that for (iv-b).
 \end{enumerate}
\end{enumerate}
\end{proof}

%%%%%%%%%%%%%%%
\begin{remark} \label{rem_directedtrail}
As the proof shows, $(\Crystal(\Gamma))\cap\pi(D_J)$ consists of the $\pi(\Lambda)$-polygonal
curves arising out of directed trails from $v_0$.
\end{remark}

\begin{remark} \label{rem_nondegenerateNamikawa}
If $\Gamma$ is endowed with a strongly connected orientation and $\dim H\geq 2$, 
then we can show the existence of a \emph{nondegenerate} $H_{\Z}$-periodic subdivision $\diamondsuit$ of
$\Vor(H,\pi(e(J)/2)+H_{\Z})$ in the sense of 
Oda-Seshadri \cite[Prop.\ 7.6 and Thm.\ 7.7]{OdaSeshadri} so that
\begin{align*}
\Crystal(\Gamma)&\subset\Sk^1(\diamondsuit)\\
\pi(\Lambda)&=\Sk^0(\diamondsuit).
\end{align*}
$\diamondsuit$ is obtained as one of the Namikawa tilings (which we called Namikawa decompositions
in \cite{OdaSeshadri}). The details are given elsewhere (cf.\ \cite{OdaCutAndProject}).
\end{remark}

Let $\widetilde{\Gamma}\rightarrow\Gamma$ be a non-maximal abelian covering
with the vanishing subgroup
\[
L:=\Image\left(H_1(\widetilde{\Gamma},\Z)\longrightarrow H_1(\Gamma,\Z)\right)
\]
and the standard realization
\[
\widetilde{\sr}\colon\widetilde{\Gamma}\longrightarrow\pi'(\Crystal(\Gamma))
\subset E'\cong(H_{\Z}/L)\otimes_{\Z}\R.
\]

\begin{conjecture} \label{conjecture}
For the non-maximal abelian covering $\widetilde{\Gamma}\rightarrow\Gamma$,
the crystal $\pi'(\Crystal(\Gamma))$ does not intrude the interiors of the
top-dimensional tiles in a $\pi'(H_{\Z})$-periodic convex polyhedral tiling
of $E'$.
\end{conjecture}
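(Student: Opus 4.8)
The plan is to transport the proof of Main Theorem~\ref{thm_maintheorem} to $E'$ through the orthogonal projection. Write $q\colon H\to E'$ for the orthogonal projection onto $E'$, so that $\pi'=q\circ\pi$ and, since $\Crystal(\Gamma)\subset H$, one has $\pi'(\Crystal(\Gamma))=q(\Crystal(\Gamma))$. By $\pi'(H_{\Z})$-periodicity it would suffice to produce a single $\pi'(H_{\Z})$-periodic convex polyhedral tiling $\mathcal{T}'$ of $E'$ and one fundamental tile $T'$ with $q(\Crystal(\Gamma))\cap T'\subset\partial T'$. The reduction steps of the maximal case carry over unchanged: Lemma~\ref{lem_directedpath} uses only strong connectivity, and together with Remark~\ref{rem_directedtrail} it reduces the claim to the projected endpoints $\pi'(\lambda(p))$ of directed paths $p$ from $v_0$ and the projected segments $\pi'(e)$ of the edges $e$ emanating from $\target(p)$. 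Thus it would be enough to exhibit, for each such pair $(p,e)$, a facet of $T'$ containing both $\pi'(\lambda(p))$ and $\pi'(e)$.

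For the candidate tiling I would first try $\mathcal{T}':=\Vor(E',\pi'(e(J)/2)+\pi'(H_{\Z}))$ with $T':=V(\pi'(e(J)/2))$, in direct analogy with the Main Theorem. The facets of a Voronoi cell of the lattice $\pi'(H_{\Z})=q(H_{\Z})$ are cut out by its relevant vectors, each of the form $\pi'(w)$ for some $w\in H_{\Z}$. One would then hope to feed in the witness cycles $\gamma$ produced by Lemma~\ref{lem_strongconnectivity} in the proof of the Main Theorem, for which $(\gamma,\lambda(p))=\abs{\gamma^+}$ and $(\gamma,e)=0$, and argue that their projections $\pi'(\gamma)$ are relevant vectors whose facets contain $\pi'(\lambda(p))$ and $\pi'(e)$, using $(\pi'(\gamma),\pi(\lambda(p)))=(\pi'(\gamma),\pi'(\lambda(p)))$.

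The main obstacle is that projection breaks the clean correspondence ``facets $\leftrightarrow$ elementary cycles'' of Proposition~\ref{prop_associatedVoronoi}, and it fails in two independent ways. First, although $\pi'(D_J)=\sum_{j\in J}[0,1]\pi'(e_j)$ is still a zonotope, its $\pi'(H_{\Z})$-translates need not tile $E'$: the lattice $\pi'(H_{\Z})$ lies strictly between the mutually dual lattices $\Lambda\cap E'$ and $\pi'(\Lambda)$, hence is in general not self-dual, so $\pi'(D_J)$ need not be the Voronoi cell of $\pi'(H_{\Z})$. This is precisely why the conjecture asserts only the existence of \emph{some} periodic tiling rather than the projected cell. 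Second, the facet constant shifts: writing $\gamma=\pi'(\gamma)+\gamma^{\natural}$ with $\gamma^{\natural}$ in the orthogonal complement of $E'$ in $H$, one computes
\[
(\pi'(\gamma),\pi'(\lambda(p)))=\abs{\gamma^+}-(\gamma^{\natural},\pi(\lambda(p))),
\]
and the correction $(\gamma^{\natural},\pi(\lambda(p)))$ depends on the path $p$ unless $\gamma\in E'$, i.e.\ unless $\gamma\perp L$. Hence the crux is to choose, for each pair $(p,e)$, a witness \emph{lying in} $E'$ (or, failing that, genuinely new relevant vectors of $\pi'(H_{\Z})$), and to verify that no projected vertex $\pi'(e(Q))$ is pushed into the interior of a top-dimensional tile.

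For this reason I expect the correct tiling $\mathcal{T}'$ to arise not as a projected Voronoi cell but through a cut-and-project construction from the nondegenerate $H_{\Z}$-periodic Namikawa tiling $\diamondsuit$ of Remark~\ref{rem_nondegenerateNamikawa}, for which $\Crystal(\Gamma)\subset\Sk^1(\diamondsuit)$ and $\pi(\Lambda)=\Sk^0(\diamondsuit)$. Slicing $\diamondsuit$ transversally to the span of $L$ and projecting to $E'$ should yield a $\pi'(H_{\Z})$-periodic tiling whose codimension-one skeleton still contains $q(\Crystal(\Gamma))$; controlling the overlaps of the projected tiles and verifying that $q(\Sk^1(\diamondsuit))$ lands in the codimension-one skeleton is where the real work lies, and is presumably the content of the companion paper~\cite{OdaCutAndProject}.
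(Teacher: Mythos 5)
You have not proved the statement, but neither does the paper: this is stated as Conjecture~\ref{conjecture} and left open. The paper offers only the Lonsdaleite computation of Example~\ref{ex_lonsdaleite} as evidence, explicitly warns that ``depending on $L$, convex polyhedral tilings other than Voronoi tilings may be needed,'' and defers the machinery of nondegenerate Namikawa tilings to the companion paper \cite{OdaCutAndProject}. So your proposal cannot be checked against a proof in the paper, and, to your credit, you do not pretend to close the argument --- you correctly flag where the real work lies.

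That said, your obstruction analysis is accurate and matches the paper's own caveats. The reduction via Lemma~\ref{lem_directedpath} and Remark~\ref{rem_directedtrail} does carry over, and the two failures you isolate --- that $\pi'(D_J)$ need not be a Voronoi cell (or even a tile) for $\pi'(H_{\Z})$, and that the facet constant $(\pi'(\gamma),\pi'(\lambda(p)))$ acquires a path-dependent correction unless the witness cycle $\gamma$ lies in $E'$, i.e.\ $\gamma\perp L$ --- are exactly why the naive transport breaks. The Lonsdaleite example already confirms this quantitatively: there $\pi'(e(J)/2)=2q_1+2q_2+3q_3$, whereas the Voronoi tiling hidden in the crystal is centered at $q_1+q_2+3q_3$, and the difference $q_1+q_2$ does not lie in $\pi'(H_{\Z})$, so the center is genuinely not the projected one. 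Your proposed route through the Namikawa tiling $\diamondsuit$ of Remark~\ref{rem_nondegenerateNamikawa} and a cut-and-project construction is precisely the direction the author points to; the unresolved issues (whether $q(\Sk^1(\diamondsuit))$ lands in the codimension-one skeleton of the projected tiling, and for which $L$ a Voronoi tiling suffices) remain open in the paper as well.
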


For instance, the Lonsdaleite crystal is the orthogonal projection onto the $3$-space of the
standard realization in the $5$-space of the maximal abelian covering.
A tiling by regular hexagonal cylinders, which is a Voronoi tiling
$\Vor(E',\xi'_0+\pi'(H_{\Z}))$ for a $\xi'_0\in E'$, turns out to be hidden in the Lonsdaleite 
crystal (cf.\ Example \ref{ex_lonsdaleite}). 

Depending on $L$, however, convex polyhedral tilings other than Voronoi tilings may be needed.
It would be of interest to characterize $L$ for which $\Vor(E',\xi'_0+\pi'(H_{\Z}))$
for some $\xi'_0$ is the $\pi'(H_{\Z})$-periodic convex polyhedral tiling in question.

\begin{remark} \label{rem_tropical}
As in Mikhalkin-Zharkov \cite{MikhalkinZharkov}, our finite connected bridgeless graph $\Gamma$
can be regarded as a \emph{compact tropical curve} with the metric of length $1$ for each edge.
The real torus $\Jac(\Gamma):=H/J_{\Z}$ turns out to be its \emph{tropical Jacobian variety}
with $g:=\dim H$ being the \emph{genus} of $\Gamma$.

Suppose $g\geq 2$ and endow $\Gamma$ with a strongly connected orientation.
Denote by $\varpi\colon H\rightarrow\Jac(\Gamma)$ the canonical projection. Then the standard realization
$\sr\colon\Gamma^{\ab}\rightarrow H$ induces modulo $H_{\Z}$ the \emph{tropical Abel-Jacobi map}
$\mu\colon\Gamma\rightarrow\Jac(\Gamma)$ (with $\mu(v_0)=0$) such that the following diagram is commutative:
\[
\xymatrix@C=3em@R=3em{
\Gamma^{\ab}\ar[r]^{\sr} \ar[d] & H \ar[d]^{\varpi} \\
\Gamma\ar[r]^(.4){\mu} & \Jac(\Gamma).
}
\]
The $\Z$-module $\Div(\Gamma)$ of \emph{tropical divisors} on $\Gamma$ consists of finite formal
$\Z$-linear combinations of points on $\Gamma$ (including those on the edges) with the degree map
\[
\deg\colon\Div(\Gamma)\longrightarrow\Z
\]
given by the sum of the coefficients.
For $m\in\Z$, denote by $\Div^m(\Gamma)$ the set of divisors of degree $m$. 
The tropical Abel-Jacobi map $\mu$ determines an obvious map $\mu_m\colon\Div^m(\Gamma)\rightarrow\Jac(\Gamma)$.

The boundary map $\partial$ induces 
\[
\partial\colon C_1(\Gamma,\Z)\longrightarrow\Div^0(\Gamma),\qquad
\partial(e_j)=\source(e_j)-\target(e_j),\quad\text{for all }j\in J,
\]
and a commutative diagram
\[
\xymatrix@C=3em@R=3em{
C_1(\Gamma,\Z)\ar[r]^{\pi}\ar[d]_{\partial} & H\ar[d]^{\varpi} \\
\Div^0(\Gamma)\ar[r]^{\mu_0} & \Jac(\Gamma)
}
\]
with
\[
\varpi(\pi(e_j))=\mu_0(\partial e_j)=\mu(\source(e_j))-\mu(\target(e_j)),\quad\text{for all }j\in J.
\]
For each vertex $v_i$, let
\begin{align*}
\valency_+(v_i)&:=\#\{j\in J;\;\source(e_j)=v_i\}\\
\valency_-(v_i)&:=\#\{j\in J;\;\target(e_j)=v_i\}\\
\valency(v_i)&:=\valency_+(v_i)+\valency_-(v_i).
\end{align*}
Following Mikhalkin-Zharkov \cite{MikhalkinZharkov}, denote
\begin{align*}
K_+&:=\sum_{i\in I}(\valency_+(v_i)-1)v_i\in\Div^{g-1}(\Gamma)\\
K_-&:=\sum_{i\in I}(\valency_-(v_i)-1)v_i\in\Div^{g-1}(\Gamma)\\
K &:=K_++K_-=\sum_{i\in I}(\valency(v_i)-2)v_i\in\Div^{2g-2}(\Gamma).
\end{align*}
$K$ is a \emph{tropical canonical divisor}.
Since
\[
\varpi(\pi(e_j))=\mu_0(\partial e_j)=\mu(\source(e_j))-\mu(\target(e_j)),\quad\text{for all }j\in J,
\]
we easily see that
\[
\varpi(\pi(e(J)))=\mu_0(K_+-K_-).
\]
As in \cite{MikhalkinZharkov}, we see that
$[\Theta]:=\varpi(\Sk^{g-1}(\Vor(H,H_{\Z})))\subset\Jac(\Gamma)$ is the \emph{tropical theta divisor},
while the image $W_{g-1}\subset\Jac(\Gamma)$ of the obvious map
\[
\mu_{g-1}\colon\underbrace{\Gamma\times\cdots\times\Gamma}_\text{$(g-1)$-times}\rightarrow\Jac(\Gamma)
\] 
turns out to be
\[
W_{g-1}=\varpi\left(\Sk^{g-1}\left(\Vor\left(H,\pi\left(\frac{e(J)}{2}\right)+H_{\Z}\right)\right)\right),
\]
hence we have \emph{tropical Riemann's theorem} (cf.\ \cite[Cor.\ 8.6]{MikhalkinZharkov})
\[
W_{g-1}=[\Theta]+\frac{1}{2}\mu_0(K_+-K_-),
\]
which obviously contains the image of the tropical Abel-Jacobi map $\mu$.

However, it is more natural as in Alexeev \cite{Alexeev} to consider  $\Pic^{g-1}(\Gamma)$,
which is a principal homogeneous space under $\Pic^0(\Gamma)=\Jac(\Gamma)$.
The canonical theta divisor $\Theta_{g-1}\subset\Pic^{g-1}(\Gamma)$ is the image of the
set of effective divisors on $\Gamma$ of degree $g-1$.
\end{remark}

%%%%%%%%%%%%%%%%
\section{Examples}

%%%%%%%%%%%%%%%%%%%
\begin{example}[Graphene crystal] \label{ex_graphene}

The graphene is the unique $2$-dimensional \emph{strongly isotropic} crystal (cf.\ Sunada \cite{Sunada}).

%%% dollar sign (3 edges)%%%%%%
\[
\xymatrix@C=5em{
v_0 \ar@/^1.5pc/[r]|{e_1} \ar[r]|{e_2} & v_1 \ar@/^1.5pc/[l]|{e_3}
}
\qquad\qquad\qquad
\begin{array}{l}
\text{The elementary cycles:}\\
	\pm\left\{
	\begin{array}{l}
	\gamma_1:=e_1+e_3 \\
	\gamma_2:=e_2+e_3 \\
	\gamma_1-\gamma_2=e_1-e_2
	\end{array}
	\right\}.
\end{array}
\]
%%%%%%%%%%%%
\begin{figure}[ht]
\centering
\includegraphics[trim=4cm 1cm 4cm 22cm, clip, width=.45\linewidth]{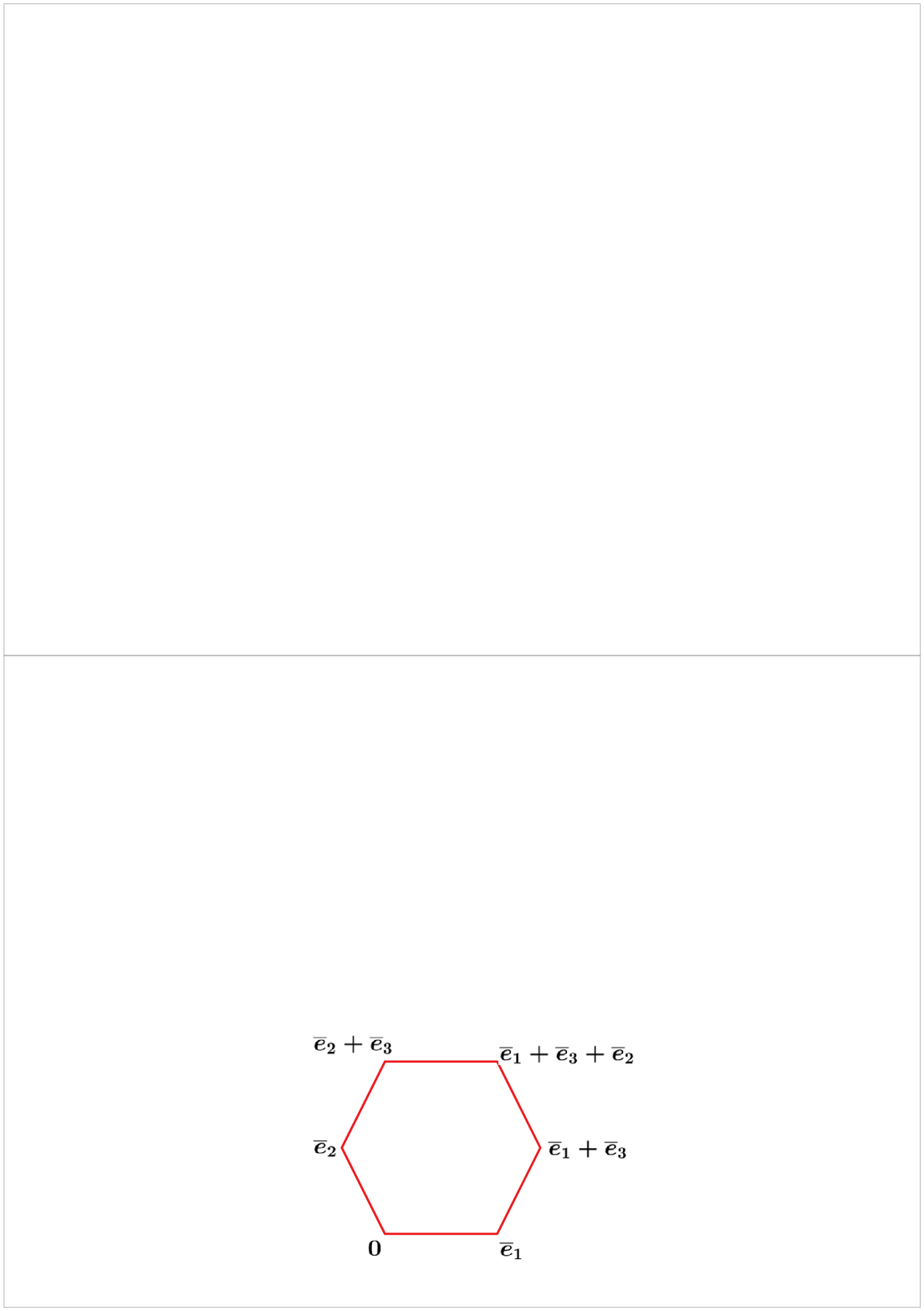}
\caption{Graphene}
\label{fig_graphene}%figure 1
\end{figure}
%%%%%%%%%%%%%%%%
For simplicity, denote $\ole_j:=\pi(e_j)$ for all $j$.
Then, $\delta v_0=-\delta v_1=e_1+e_2-e_3$ implies
\[
\ole_1+\ole_2=\ole_3,
\]
and
\[
 \left\{
 \begin{array}{l}
 \gamma_1=2\ole_1+\ole_2  \\[1ex]
 \gamma_2=\ole_1+2\ole_2  
 \end{array}
 \right.
 \qquad\text{hence}\qquad
 \left\{
 \begin{array}{l}
 {\displaystyle \ole_1=\frac{2\gamma_1-\gamma_2}{3}} \\[2ex]
 {\displaystyle \ole_2=\frac{-\gamma_1+2\gamma_2}{3}}.
 \end{array}
 \right.
\]
In view of
\[
(\gamma_1,\gamma_1)=(\gamma_2,\gamma_2)=2,\quad(\gamma_1,\gamma_2)=1,
\]
we have
\[
(\ole_1,\ole_1)=(\ole_2,\ole_2)=2/3,\quad (\ole_1,\ole_2)=-1/3
\]
and
\[
(\gamma_1,\ole_1)=(\gamma_2,\ole_2)=1,\quad(\gamma_1,\ole_2)=(\gamma_2,\ole_1)=0,
\]
hence $\{\gamma_1,\gamma_2\}$ and $\{\ole_1,\ole_2\}$ are mutually dual bases of
$H_{\Z}$ and $\pi(\Lambda)$, respectively.
The Voronoi cell $\pi(D_J)$ is a regular hexagon, and $(\Crystal(\Gamma))\cap\pi(D_J)$ is its circumference,
that is (cf.\ Figure \ref{fig_graphene}),
\begin{align*}
[0,\ole_1]&\cup[\ole_1,\ole_1+\ole_3]\cup[\ole_1+\ole_3,\ole_1+\ole_3+\ole_2]\\
&\cup[0,\ole_2]\cup[\ole_2,\ole_2+\ole_3]\cup[\ole_2+\ole_3,\ole_2+\ole_3+\ole_1].
\end{align*}
\end{example}
%%%%%%%%%%%%%%%%%%%

%%%%%%%%%%%%%%%%%%%
\begin{example}[Diamond crystal] \label{ex_diamond}

The diamond is one of the two $3$-dimensional \emph{strongly isotropic} crystals (cf.\ Sunada \cite{Sunada}).

%%% diamond (4 edges)%%%%%%%%%%%
\[
\xymatrix@C=5em{
v_0 \ar@/^2pc/[r]|{e_1} \ar@/^.7pc/[r]|{e_2} 
          \ar@/_.7pc/[r]|{e_3}  & v_1 \ar@/^2pc/[l]|{e_4}
}
\qquad\qquad\qquad
\begin{array}{l}
\text{The elementary cycles:}\\
	\pm\left\{
	\begin{array}{l}
	\gamma_1:=e_1+e_4\\
	\gamma_2:=e_2+e_4\\
	\gamma_3:=e_3+e_4\\
	\gamma_1-\gamma_2=e_1-e_2\\
	\gamma_2-\gamma_3=e_2-e_3\\
	\gamma_3-\gamma_1=-e_1+e_3
	\end{array}
	\right\}.
\end{array}
\]
For simplicity, denote $\ole_j:=\pi(e_j)$ for all $j$.
Then $\delta v_0=-\delta v_1=e_1+e_2+e_3-e_4$ implies
\[
\ole_4=\ole_1+\ole_2+\ole_3.
\]
Thus
\[
\left\{
\begin{array}{l}
\gamma_1=2\ole_1+\ole_2+\ole_3\\[1ex]
\gamma_2=\ole_1+2\ole_2+\ole_3\\[1ex]
\gamma_3=\ole_1+\ole_2+2\ole_3
\end{array}
\right.
\qquad\text{hence}\qquad
\left\{
\begin{array}{l}
{\displaystyle \ole_1=\frac{3\gamma_1-\gamma_2-\gamma_3}{4}}\\[2ex]
{\displaystyle \ole_2=\frac{-\gamma_1+3\gamma_2-\gamma_3}{4}}\\[2ex]
{\displaystyle \ole_3=\frac{-\gamma_1-\gamma_2+3\gamma_3}{4}}.
\end{array}
\right.
\]
In view of
\[
(\gamma_1,\gamma_1)=(\gamma_2,\gamma_2)=(\gamma_3,\gamma_3)=2,\quad
(\gamma_1,\gamma_2)=(\gamma_2,\gamma_3)=(\gamma_1,\gamma_3)=1,
\]
we get
\[
(\ole_1,\ole_1)=(\ole_2,\ole_2)=(\ole_3,\ole_3)=1/2,\quad
(\ole_1,\ole_2)=(\ole_2,\ole_3)=(\ole_1,\ole_3)=-1/4\\
\]
with $\{\gamma_1,\gamma_2,\gamma_3\}$ and $\{\ole_1,\ole_2,\ole_3\}$ being mutually dual bases
of $H_{\Z}$ and $\pi(\Lambda)$, respectively.

Let
\[
\left\{
\begin{array}{l}
u_1:=\ole_2+\ole_3\\
u_2:=\ole_1+\ole_3\\
u_3:=\ole_1+\ole_2.
\end{array}
\right.
\]
Then it is easy to see that $\{u_1,u_2,u_3\}$ is orthonormal and
\[
H_{\Z}=\Z\gamma_1+\Z\gamma_2+\Z\gamma_3\subset\Z u_1+\Z u_2+\Z u_3
\subset\pi(\Lambda)=\Z \ole_1+\Z\ole_2+\Z\ole_3
\]
with
\[
\left\{
\begin{array}{l}
\gamma_1=u_2+u_3\\
\gamma_2=u_1+u_3\\
\gamma_3=u_1+u_2
\end{array}
\right.
\]
and each lattice is a sublattice of index $2$ of the one to the right.
In particular, $H_{\Z}$ is the \emph{face centered cubic} (fcc) lattice, which is also known as $A_3$.

The Voronoi cell $\pi(D_J)$ is a rhombic dodecahedron (cf.\ Figure \ref{fig_diamondrhomb12}) with
%%%%%%%%%%%%
\begin{figure}[ht]
\centering
\includegraphics[trim=1cm 1cm 1cm 21cm, clip, width=.7\linewidth]{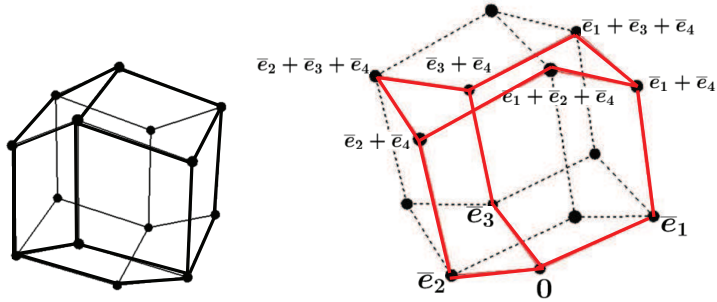}
\caption{Rhombic dodecahedron and diamond crystal on its surface}
\label{fig_diamondrhomb12} %figure 2
\end{figure}
%%%%%%%%%%%%%%%%
\begin{eqnarray*}
\lefteqn{(\Crystal(\Gamma))\cap\pi(D_J)=}\\
&&[0,\ole_1]\cup[\ole_1,\ole_1+\ole_4]\cup[\ole_1+\ole_4,\ole_1+\ole_4+\ole_2]
 \cup[\ole_1+\ole_4,\ole_1+\ole_4+\ole_3]\\
&&\cup[0,\ole_2]\cup[\ole_2,\ole_2+\ole_4]\cup[\ole_2+\ole_4,\ole_2+\ole_4+\ole_1]
 \cup[\ole_2+\ole_4,\ole_2+\ole_4+\ole_3]\\
&&\cup[0,\ole_3]\cup[\ole_3,\ole_3+\ole_4]\cup[\ole_3+\ole_4,\ole_3+\ole_4+\ole_1]
 \cup[\ole_3+\ole_4,\ole_3+\ole_4+\ole_2].
\end{eqnarray*}
(cf.\ Figure \ref{fig_diamondrhomb12}).
\end{example}
%%%%%%%%%%%%%%%%%%%

%%%%%%%%%%%%%%%%%%%
\begin{example}[$K_4$ crystal] \label{ex_K4}

The $K_4$ crystal was shown by Sunada \cite{Sunada}
to possess the ``maximal symmetry'' and ``strong isotropy'' properties,
which it (together with its mirror image) shares in dimension three only with the diamond crystal.

%%%% $K_4$ crystal%%%%%%%%%%
\[
\xymatrix@R=3em@C=4em{
 & v_2 \ar[ldd]|{f_1} & \\
 & v_3 \ar[u]|{e_2}  \ar[rd]|{e_1}& \\
 v_0 \ar[ru]|{e_3} \ar[rr]|{f_2}& & v_1 \ar[luu]|{f_3}
}
\qquad\qquad
\begin{array}{l}
\text{The elementary cycles:}\\
	\pm\left\{
	\begin{array}{l}
	\gamma_1:=e_2+e_3+f_1\\
	\gamma_2:=-e_1-e_3+f_2\\
	\gamma_3:=e_1-e_2+f_3\\
	\gamma'_1:=-e_2-e_3+f_2+f_3=\gamma_2+\gamma_3\\
	\gamma'_2:=e_1+e_3+f_1+f_3=\gamma_1+\gamma_3\\
	\gamma'_3:=-e_1+e_2+f_1+f_2=\gamma_1+\gamma_2\\
	\gamma_0:=f_1+f_2+f_3=\gamma_1+\gamma_2+\gamma_3
	\end{array}
	\right\}.
\end{array}
\]
For simplicity, denote
$\ole_j:=\pi(e_j)$ and $\olf_j:=\pi(f_j)$ for all $j$.
Then
\begin{align*}
\delta v_0&=e_3-f_1+f_2\\
\delta v_1&=-e_1-f_2+f_3\\
\delta v_2&=-e_2+f_1-f_3\\
\delta v_3&=e_1+e_2-e_3
\end{align*}
implies
\[
\left\{
\begin{array}{l}
\ole_1=-\olf_2+\olf_3\\[1ex]
\ole_2=\olf_1-\olf_3\\[1ex]
\ole_3=\olf_1-\olf_2.
\end{array}
\right.
\]
Hence
\begin{align*}
&\left\{
\begin{array}{l}
\gamma_1=3\olf_1-\olf_2-\olf_3\\[1ex]
\gamma_2=-\olf_1+3\olf_2-\olf_3\\[1ex]
\gamma_3=-\olf_1-\olf_2+3\olf_3
\end{array}
\right.
&\left\{
\begin{array}{l}
\gamma_0=\olf_1+\olf_2+\olf_3\\[1ex]
\gamma'_1=2(-\olf_1+\olf_2+\olf_3)\\[1ex]
\gamma'_2=2(\olf_1-\olf_2+\olf_3)\\[1ex]
\gamma'_3=2(\olf_1+\olf_2-\olf_3).
\end{array}
\right.
\end{align*}
Consequently,
\begin{align*}
&\left\{
\begin{array}{l}
{\displaystyle \olf_1=\frac{2\gamma_1+\gamma_2+\gamma_3}{4}}\\[2ex]
{\displaystyle \olf_2=\frac{\gamma_1+2\gamma_2+\gamma_3}{4}}\\[2ex]
{\displaystyle \olf_3=\frac{\gamma_1+\gamma_2+2\gamma_3}{4}}
\end{array}
\right.
&\left\{
\begin{array}{l}
{\displaystyle \ole_1=\frac{-\gamma_2+\gamma_3}{4}}\\[2ex]
{\displaystyle \ole_2=\frac{\gamma_1-\gamma_3}{4}}\\[2ex]
{\displaystyle \ole_3=\frac{\gamma_1-\gamma_2}{4}}.
\end{array}
\right.
\end{align*}
In view of
\[
(\gamma_1,\gamma_1)=(\gamma_2,\gamma_2)=(\gamma_3,\gamma_3)=3,\quad
(\gamma_1,\gamma_2)=(\gamma_2,\gamma_3)=(\gamma_1,\gamma_3)=-1,
\]
we have
\[
(\olf_1,\olf_1)=(\olf_2,\olf_2)=(\olf_3,\olf_3)=1/2,\quad
(\olf_1,\olf_2)=(\olf_2,\olf_3)=(\olf_1,\olf_3)=1/4
\]
with $\{\gamma_1,\gamma_2,\gamma_3\}$ and $\{\olf_1,\olf_2,\olf_3\}$ being mutually dual bases
of $H_{\Z}$ and $\pi(\Lambda)$, respectively.

Let
\[
\left\{
\begin{array}{l}
u_1:=-\olf_1+\olf_2+\olf_3\\
u_2:=\olf_1-\olf_2+\olf_3\\
u_3:=\olf_1+\olf_2-\olf_3.
\end{array}
\right.
\]
Then it is easy to see that $\{u_1,u_2,u_3\}$ is orthonormal and
\[
H_{\Z}=\Z\gamma_1\oplus\Z\gamma_2\oplus\Z\gamma_3\subset\Z u_1\oplus\Z u_2\oplus\Z u_3\subset
\pi(\Lambda)=\Z\olf_1\oplus\Z\olf_2\oplus\Z\olf_3
\]
with
\[
\left\{
\begin{array}{l}
\gamma_1=-u_1+u_2+u_3\\[1ex]
\gamma_2=u_1-u_2+u_3\\[1ex]
\gamma_3=u_1+u_2-u_3
\end{array}
\right.
\]
and each lattice is a sublattice of index $4$ of the one to the right.
In particular, $H_{\Z}$ is the \emph{body centered cubic} (bcc) lattice, which is also known as $A_3^{\ast}$.
The Voronoi cell $\pi(D_J)$ is a truncated octahedron (also known as Kelvin polytope) 
(cf.\ Figure \ref{fig_k4kelvin})
%%%%%%%%%%%%
\begin{figure}[ht]
\centering
\includegraphics[trim=1cm 1cm 1cm 21cm, clip, width=.8\linewidth]{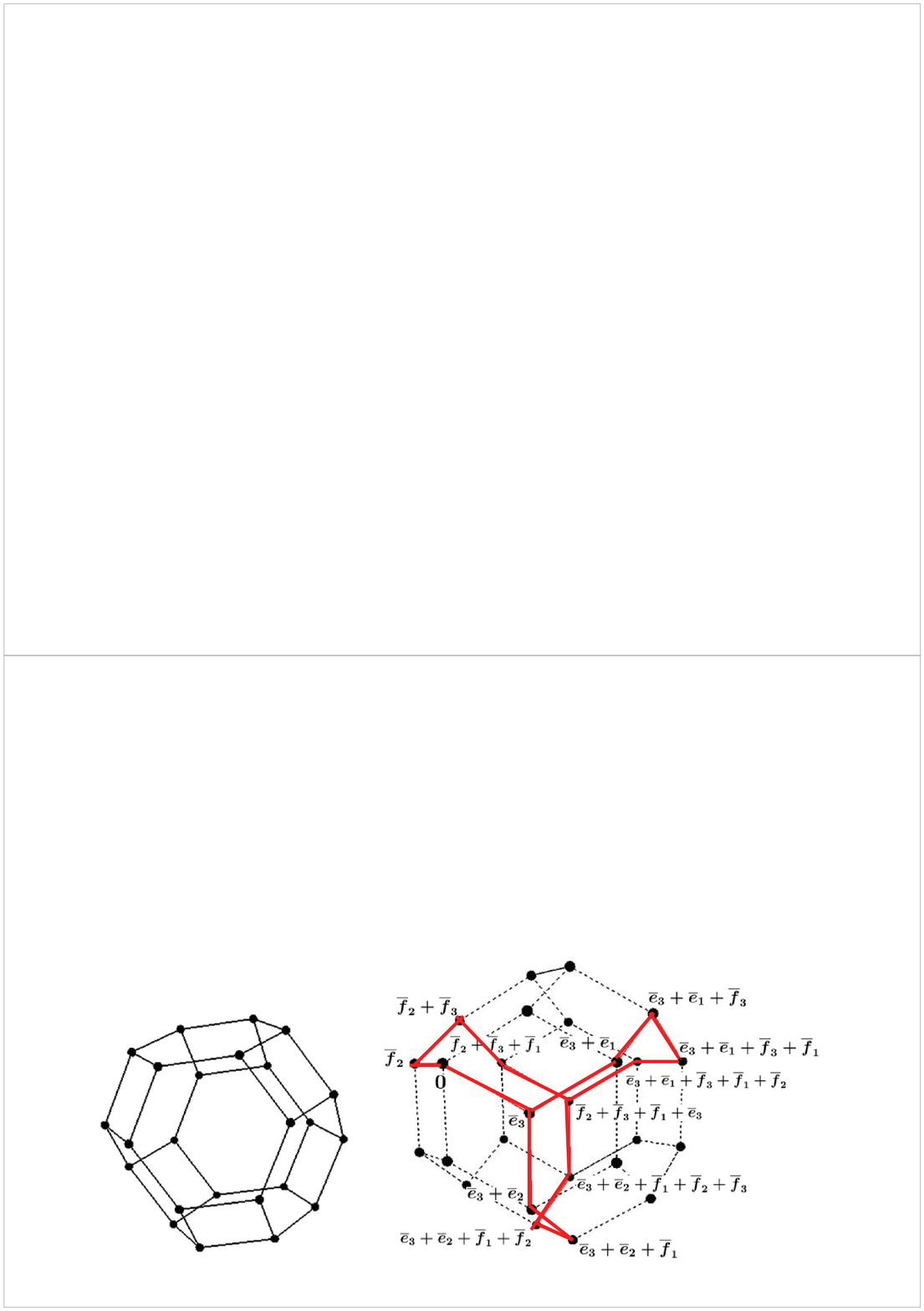}
\caption{Truncated octahedron (also known as Kelvin polytope) and $K_4$ crystal on its surface}
\label{fig_k4kelvin} %figurenew3
\end{figure}
%%%%%%%%%%%%%%%%
such that
\begin{eqnarray*}
\lefteqn{(\Crystal(\Gamma))\cap\pi(D_J)=} \\
&&[0,\ole_3]\cup[\ole_3,\ole_3+\ole_1]\cup[\ole_3+\ole_1,\ole_3+\ole_1+\olf_3]\\
&&\cup[\ole_3+\ole_1+\olf_3,\ole_3+\ole_1+\olf_3+\olf_1]
 \cup[\ole_3+\ole_1+\olf_3+\olf_1,\ole_3+\ole_1+\olf_3+\olf_1+\olf_2]\\
&&\cup[\ole_3,\ole_3+\ole_2]\cup[\ole_3+\ole_2,\ole_3+\ole_2+\olf_1]
 \cup[\ole_3+\ole_2+\olf_1,\ole_3+\ole_2+\olf_1+\olf_2]\\
&&\cup[\ole_3+\ole_2+\olf_1+\olf_2,\ole_3+\ole_2+\olf_1+\olf_2+\olf_3] \\
&&\cup[0,\olf_2]\cup[\olf_2,\olf_2+\olf_3]\cup[\olf_2+\olf_3,\olf_2+\olf_3+\olf_1]
 \cup[\olf_2+\olf_3+\olf_1,\olf_2+\olf_3+\olf_1+\ole_3]\\
&&\cup[\olf_2+\olf_3+\olf_1+\ole_3,\olf_2+\olf_3+\olf_1+\ole_3+\ole_1]
 \cup[\olf_2+\olf_3+\olf_1+\ole_3,\olf_2+\olf_3+\olf_1+\ole_3+\ole_2].
\end{eqnarray*}
(cf.\ Figure \ref{fig_k4kelvin})
\end{example}
%%%%%%%%%%%%%%%%%%%

%%%%%%%%%%%%%%%%%%%
\begin{example}[Lonsdaleite (also known as Hexagonal diamond) crystal] \label{ex_lonsdaleite}
\[
\xymatrix@C=6em@R=5em{
v_3 \ar@/^1.5pc/[d]|{n_3} & v_2 \ar@/^1.5pc/[d]|{m_3} \ar[l]|{l_2} \\
v_0 \ar@/^1.5pc/[u]|{n_1} \ar[u]|{n_2} \ar[r]|{l_1} & v_1 \ar[u]|{m_2} \ar@/^1.5pc/[u]|{m_1}
}
\]

We have
\[
H_{\Z}=\Z(l_1-m_3+l_2+n_3)\oplus\Z(m_1+m_3)\oplus\Z(m_2+m_3)\oplus\Z(n_1+n_3)\oplus\Z(n_2+n_3).
\]
Let
\[
L:=\Z(m_1+m_3-n_1-n_3)\oplus\Z(m_2+m_3-n_2-n_3)\subset H_{\Z}
\]
and consider the free abelian covering
\[
\widetilde{\Gamma}:=\Gamma^{\ab}/L\longrightarrow\Gamma.
\]
The associated standard realization of $\widetilde{\Gamma}$ in the $3$-dimensional Euclidean space
\[
E':=(H_{\Z}/L)\otimes_{\Z}\R
\]
turns out to be the Lonsdaleite crystal.
Since
\[
\left\{
\begin{array}{l}
\delta v_0=n_1+n_2-n_3+l_1\\[1ex]
\delta v_1=m_1+m_2-m_3-l_1\\[1ex]
\delta v_2=-m_1-m_2+m_3+l_2\\[1ex]
\delta v_3=-n_1-n_2+n_3-l_2,
\end{array}
\right.
\]
we see that
\[
C_1(\Gamma,\R)=:C\supset E'
=\left\{
\begin{array}{l}
n_1+n_2-n_3+l_1,m_1+m_2-m_3-l_1,\\
-m_1-m_2+m_3+l_2,-n_1-n_2+n_3-l_2,\\
m_1+m_3-n_1-n_3,m_2+m_3-n_2-n_3
\end{array}
\right\}^{\perp}
\]
with the orthogonal projection $\pi'\colon C\rightarrow E'$.
For simplicity denote
\[
l'_j:=\pi'(l_j),m'_j:=\pi'(m_j),n'_j:=\pi'(n_j),\quad\text{for all }j.
\]
Then obviously
\begin{align*}
l'_1=m'_1+m'_2-m'_3&=-n'_1-n'_2+n'_3\\[1ex]
l'_2=m'_1+m'_2-m'_3&=-n'_1-n'_2+n'_3\\[1ex]
m'_1+m'_2&=n'_1+n'_3\\
m'_2+m'_3&=n'_2+n'_3.
\end{align*}
Let
\begin{align*}
q_1&:=-m'_2+n'_3=-n'_2+m'_3 \\
q_2&:=-m'_1+n'_3=-n'_1+m'_3 \\
q_3&:=q_1+q_2-m'_3=-q_1-q_2+n'_3.
\end{align*}
Then easy computation shows that
\begin{align*}
m'_1&=q_1+q_3 \\
m'_2&=q_2+q_3 \\
m'_3&=q_1+q_2-q_3 \\
n'_1&=q_1-q_3\\
n'_2&=q_2-q_3\\
n'_3&=q_1+q_2+q_3 \\
l'_1&=3q_3 \\
l'_2&=3q_3. 
\end{align*}
With $\Lambda:=C_1(\Gamma,\Z)$, we see easily that
\[
\Lambda\cap E'=\Z\gamma_1\oplus\Z\gamma_2\oplus\Z\gamma_3,
\]
where
\begin{align*}
\gamma_1&:=m_1+m_3+n_1+n_3 \\
\gamma_2&:=m_2+m_3+n_2+n_3 \\
\gamma_3&:=m_1+m_2-m_3-n_1-n_2+n_3+3l_1+3l_2. 
\end{align*}
In $E'$ we have a sequence of lattices
\[
\Lambda\cap E'\subset\pi'(H_{\Z})\subset\pi'(\Lambda)
\]
with
\begin{align*}
\pi'(\Lambda)&=\Z q_1\oplus\Z q_2\oplus\Z q_3\\
\pi'(H_{\Z})&=\Z(2q_1+2q_2)\oplus\Z(q_1+2q_2)\oplus\Z(4q_3)\\
\Lambda\cap E'&=\Z\gamma_1\oplus\Z\gamma_2\oplus\Z\gamma_3.
\end{align*}
Since
\[
\left\{
\begin{array}{l}
\gamma_1=4q_1+2q_2\\[1ex]
\gamma_2=2q_1+4q_2\\[1ex]
\gamma_3=24q_3
\end{array}
\right.
\qquad\text{hence}\qquad
\left\{
\begin{array}{l}
{\displaystyle q_1=\frac{2\gamma_1-\gamma_2}{6}}\\[2ex]
{\displaystyle q_2=\frac{-\gamma_1+2\gamma_2}{6}}\\[2ex]
{\displaystyle q_3=\frac{\gamma_3}{24}}
\end{array}
\right.
\]
with
\[
(\gamma_1,\gamma_1)=(\gamma_2,\gamma_2)=4,\quad
(\gamma_3,\gamma_3)=24,\quad
(\gamma_1,\gamma_2)=2,\quad
(\gamma_1,\gamma_3)=(\gamma_2,\gamma_3)=0,
\]
we see that
\[
(q_1,q_1)=(q_2,q_2)=1/3,\quad (q_1,q_2)=-1/6,\quad (q_3,q_3)=1/24,\quad (q_1,q_3)=(q_2,q_3)=0
\]
and that $\{\gamma_1,\gamma_2,\gamma_3\}$ and $\{q_1,q_2,q_3\}$ 
are mutually dual bases of $\Lambda\cap E'$ and $\pi'(\Lambda)$, respectively.

The Lonsdaleite crystal, which is obtained as the standard realization 
$\pi'(\Crystal(\Gamma))$ of a non-maximal abelian covering, also turns out not to intrude 
the interiors of the top-dimensional Voronoi cells in
\[
\Vor(E',q_1+q_2+3q_3+\pi'(H_{\Z})).
\]
The Voronoi cell 
\[
V(q_1+q_2+3q_3)\in\Vor(E',q_1+q_2+3q_3+\pi'(H_{\Z}))
\]
is a regular hexagonal cylinder (cf.\ Figure \ref{fig_lonsdaleite}) with vertices
\[
\begin{array}{l}
-q_3,q_1-q_3,2q_1+q_2-q_3,2q_1+2q_2-q_3,q_1+2q_2-q_3,q_2-q_3,\\
7q_3,q_1+7q_3,2q_1+q_2+7q_3,2q_1+2q_2+7q_3,q_1+2q_2+7q_3,q_2+7q_3.
\end{array}
\]
%%%%%%%%%%%%
\begin{figure}[ht]
\centering
\includegraphics[trim=1cm 1cm 1cm 21cm, clip, width=.8\linewidth]{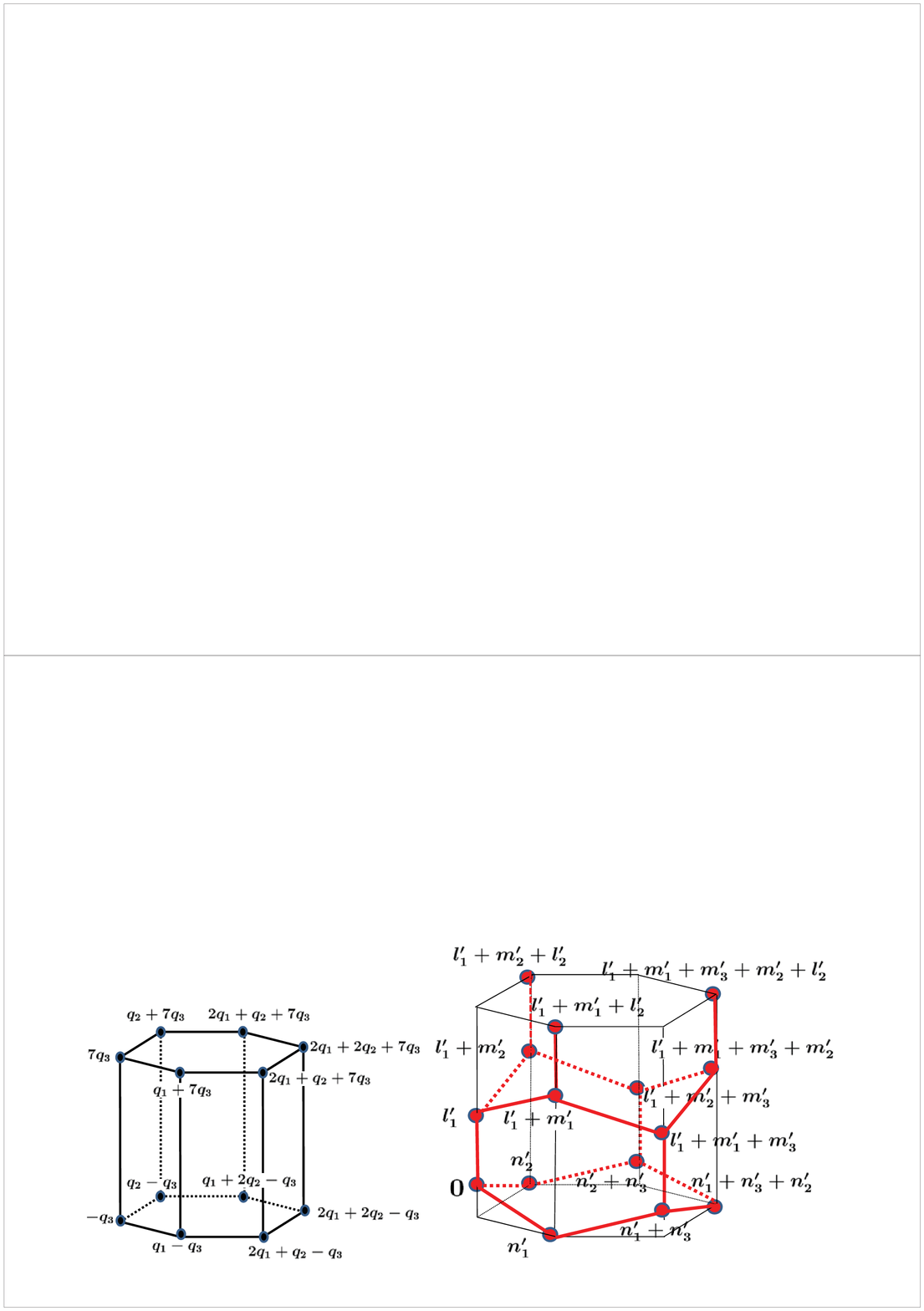}
\caption{Regular hexagonal cylinder and Lonsdaleite crystal on its surface}
\label{fig_lonsdaleite} %figurenew4
\end{figure}
%%%%%%%%%%%%%%%%
$V(q_1+q_2+3q_3)\cap\pi'(\Crystal(\Gamma))$ consists of the following
(cf.\ Figure \ref{fig_lonsdaleite}):

\begin{itemize}
\item the hexagonal closed curve joining
\[
\left\{
	\begin{array}{lcl}
	0 & & \\
	n'_1&=&q_1-q_3\\
	n'_1+n'_3&=&2q_1+q_2\\
	n'_1+n'_3+n'_2&=&2q_1+2q_2-q_3\\
	n'_2+n'_3&=&q_1+2q_2\\
	n'_2&=&q_2-q_3\\
	0 &&
	\end{array}
	\right.
\]
in this order,

\item the hexagonal closed curve joining
\[
\left\{
	\begin{array}{lcl}
	l'_1&=&3q_3\\
	l'_1+m'_1&=&q_1+4q_3\\
	l'_1+m'_1+m'_3&=&2q_1+q_2+3q_3\\
	l'_1+m'_1+m'_3+m'_2&=&2q_1+2q_2+4q_3\\
	l'_1+m'_2+m'_3&=&q_1+2q_2+3q_3\\
	l'_1+m'_2&=&q_2+4q_3\\
	l'_1&=&3q_3
	\end{array}
	\right.
\]
in this order,

\item the line segment joining
\[
\left\{
 	\begin{array}{lcl}
 	0&&\\
 	l'_1&=&3q_3
 	\end{array}
 	\right.
\]

\item the line segment joining
\[
\left\{
 	\begin{array}{lcl}
 	n'_1+n'_3&=&2q_1+q_2\\
 	n'_1+n'_3+l'_1&=&2q_1+q_2+3q_3
 	\end{array}
 	\right.
\]

\item the line segment joining
\[
\left\{
 	\begin{array}{lcl}
 	n'_2+n'_3&=&q_1+2q_2\\
 	n'_2+n'_3+l'_1&=&q_1+2q_2+3q_3
 	\end{array}
 	\right.
\]

\item the line segment joining
\[
\left\{
 	\begin{array}{lcl}
 	l'_1+m'_1&=&q_1+4q_3\\
 	l'_1+m'_1+l'_2&=&q_1+7q_3
 	\end{array}
 	\right.
\]

\item the line segment joining
\[
\left\{
 	\begin{array}{lcl}
 	l'_1+m'_1+m'_3+m'_2&=&2q_1+2q_2+4q_3\\
 	l'_1+m'_1+m'_3+m'_2+l'_2&=&2q_1+2q_2+7q_3
 	\end{array}
 	\right.
\]

\item the line segment joining
\[ 
\left\{
 	\begin{array}{lcl}
 	l'_1+m'_2&=&q_2+4q_3\\
 	l'_1+m'_2+l'_2&=&q_2+7q_3
 	\end{array}
 	\right.
\]
\end{itemize}
In this case, we have a bijection
\[
\Gamma=\widetilde{\Gamma}/(H_{\Z}/L)\stackrel{\sim}{\longrightarrow}
\pi'(\Crystal(\Gamma))/\pi'(H_{\Z}).
\]

\end{example}
%%%%%%%%%%%%%%%%%%%

%%%%%%%%%%%%%%%%%

\bigskip

\begin{flushleft}
\textsc{Professor Emeritus}\\
\textsc{Tohoku University}\\[2ex]
\textit{E-mail address}: \texttt{odatadao@math.tohoku.ac.jp}
\end{flushleft}

\end{document}